\newtheorem{example}{Example}
\title[Replica Symmetry in Upper Tails of Mean-Field Hypergraphs]{Replica Symmetry in Upper Tails of Mean-Field Hypergraphs}
\author[Mukherjee]{Somabha Mukherjee}
\address{Department of Statistics, University of Pennsylvania, Philadelphia, USA,
{\tt somabha@wharton.upenn.edu}}
\author[Bhattacharya]{Bhaswar B. Bhattacharya}
\address{Department of Statistics, University of Pennsylvania, Philadelphia, USA,
{\tt bhaswar@wharton.upenn.edu}}
\begin{document}


\maketitle

\begin{abstract} Given a sequence of $s$-uniform hypergraphs $\{H_n\}_{n \geq 1}$, denote by $T_p(H_n)$ the number of edges in the random induced hypergraph obtained by including every vertex in $H_n$ independently with probability $p \in (0, 1)$. Recent advances in the large deviations of low complexity non-linear functions of independent Bernoulli variables can be used to show that tail probabilities of  $T_p(H_n)$ are precisely approximated by the so-called `mean-field' variational problem, under certain assumptions on the sequence $\{H_n\}_{n \geq 1}$. In this paper, we study properties of this variational problem for the upper tail of  $T_p(H_n)$, assuming that the mean-field approximation holds.  In particular, we show that the variational problem has a universal {\it replica symmetric} phase (where it is uniquely minimized by a constant function), for any sequence of {\it regular} $s$-uniform hypergraphs, which depends only on $s$. We also analyze the associated variational problem for the related problem of estimating subgraph frequencies in a converging sequence of dense graphs. Here, the variational problems themselves have a limit which can be expressed in terms of the limiting graphon. 
\end{abstract}


\section{Introduction}\label{introduction}

Given a $s$-uniform hypergraph $H=(V(H),E(H))$ with vertex set $V(H)$ and hyperedge set $E(H)$ (which is a collection of $s$-element subsets of $V(H)$) and $p \in (0, 1)$ fixed, construct a random sub-hypergraph  of $H$ as follows: sample each vertex in $V(H)$ with probability  $p$ and consider the induced sub-hypergraph of $H$ on the set of sampled vertices. Denote by $T_p(H)$ the number of edges in this random sub-hypergraph, which 
can be written, more formally, as 
\begin{align}\label{eq:TpH}
T_p(H) := \sum_{e \in E(H)} \prod_{v \in e} X_v, 
\end{align}
where $X_1, X_2, \ldots,X_{|V(H)|}$ are i.i.d. $\mathrm{Ber}(p)$. (Note that $\E(T_p(H))= |E(H)| p^s$.) Numerous celebrated problems in combinatorial probability can be re-formulated in terms of \eqref{eq:TpH}, for some choice of $H$. 

\begin{itemize}

\item[(1)] {\it Subgraphs in Erd\H os-R\'enyi random graphs}: The number of copies of a fixed graph $F=(V(F), E(F))$ in the Erd\H os-R\'enyi random graph $G(n, p)$ can be formulated in terms of \eqref{eq:TpH} as follows: Consider the hypergraph $H_n(F)$ (to be referred to as the {\it $F$-counting hypergraph}) with vertex set of $H_n(F)$ as the edge set of the complete graph $K_n$, and edge set of $H_n(F)$ as the collection of the edge sets of all copies of $F$ in $K_n$. Then $H_n(F)$ is a $|E(F)|$-uniform regular hypergraph\footnote{A hypergraph $H=(V(H), E(H))$ is said to be $d$-{\it regular} if every vertex $v \in V(H)$ is incident on exactly $d$ hyperedges in $E(H)$.} and $T_p(H_n(F))$ is precisely the number of copies of $F$ in $G(n, p)$. 

\item[(2)] {\it Arithmetic progressions in a random set}: Given $r\geq 1$, define the {\it $r$-AP counting hypergraph} $H_n(r)$ as the hypergraph with vertex set $[n]:=\{1, 2, \ldots, n\}$ and edge set the set of all $r$ term arithmetic progressions in $[n]$. Then $T_p(H_n(r))$ is the number of $r$-term arithmetic progressions in a random subset of $[n]$, where every element is included in the subset independently with probability $p$. 

\item[(3)] {\it Estimating motif frequencies in large graphs}: Efficiently counting motifs in a large graph such as the number of edges or triangles (more generally, subgraph counts) is an important statistical and computational problem \cite{KW}. One natural strategy to reduce storage and computational costs is to randomly sample subsets of vertices, where natural estimates of subgraph counts are often of the form \eqref{eq:TpH} above (see Section \ref{sec:dense} for details). 

\end{itemize}

Concentration inequalities for $T_p(H_n)$, for a sequence of hypergraphs $\{H_n\}_{n \geq 1}$,  are well-known (see \cite{JR11,War} and the references therein). In this paper, we study the precise large deviations upper tail probabilities of $T_p(H_n)$, in the {\it fixed} $p \in (0, 1)$ regime, which involves determining the exact asymptotics of 
\begin{align}\label{eq:P}
\log \P(T_p(H_n) \geq r^s |E(H_n)|), \quad \text{for} \quad 0 < p < r < 1.
\end{align}
Note that $T_p(H_n)$ is a random multi-linear polynomial indexed by $H_n$, and establishing its upper tail asymptotics falls in the framework of non-linear large deviations, introduced in the seminal paper of Chatterjee and Dembo \cite{CD16}. Here, they studied the large deviations of a general random function $f(X_1, X_2, \ldots, X_n)$, where $f: \{0, 1\}^n \rightarrow \R$ and $X_1, X_2, \ldots, X_n$ are i.i.d. $\dBer(p)$, and came up with a notion of complexity of the gradient of the function (along with additional smoothness properties), under which the tail probabilities of   $f(X_1, X_2, \ldots, X_n)$ (and the associated Gibbs measure on $\{0, 1\}^n$) can be well-approximated by the so-called `mean-field' variational problem, which is an entropic variational problem over the set of product measures on the hypercube $\{0, 1\}^n$. Thereafter, Eldan \cite{eldan} obtained an improved set of conditions, which involved computing the Gaussian width of the gradient of the function, under which the above reduction holds. Similar results for Gibbs measures beyond the hypercube were obtained by \cite{BM17,yan}, and recently by Austin \cite{austin} for very general product spaces.

These results can be used to obtain various sufficient conditions on a sequence of hypergraphs $\{H_n\}_{n\geq 1}$ for which the probability in \eqref{eq:P} can be approximated by the corresponding mean-field variational problem. This motivates the following abstract definition:

\begin{defn}\label{defn:approx}(Mean-field hypergraphs) Given a $s$-uniform hypergraph $H$ and $p \in (0, 1)$, the upper-tail mean field variational problem for $T_p(H)$ is defined as: 
\begin{equation}\label{varprob}
\phi_{H}(r, p) = \inf_{\bm x \in [0,1]^{|V(H)|}} \left\{\frac{1}{|V(H)|}\sum_{v=1}^{|V(H)|} I_p(x_v) :~t(H,\bm x) \geq r^s|E(H)|\right\},
\end{equation}
where
\begin{itemize}
\item[--] $0 < p < r < 1$, 
\item[--] $I_p(x) = x\log\frac{x}{p} + (1-x)\log\frac{1-x}{1-p}$, is the relative entropy of $\dBer(x)$ with respect to $\dBer(p)$; and

\item[--] $t(H,\bm x) = \sum_{e \in E(H)}\prod_{v \in e} x_v$, for $\bm x =\left(x_1,\ldots,x_{|V(H)|}\right) \in [0,1]^{|V(H)|}$. 

\end{itemize}
Moreover, a sequence $\{H_n\}_{n \geq 1}$ of $s$-uniform hypergraphs  is said to be {\it mean-field} (for the upper tail problem) if  
\begin{align}\label{eq:phi}
\lim_{n\rightarrow \infty} \frac{\frac{1}{|V(H_n)|}\log \mathbb{P}(T_p(H_n)\geq r^s |E(H_n)|)}{-\phi_{H_n}(r, p)} = 1,
\end{align}
for all  $0 < p < r < 1$. 
\end{defn}

The mean-field condition has been established for several natural hypergraph sequences. For example, it follows from results in the landmark paper of Chatterjee and Varadhan \cite{CV11}, that for any fixed graph $F$, the $F$-counting hypergraph $H_n(F)$ defined above is mean-field. In this case, the associated variational problems $\{\phi_{H_n(F)}(r, p)\}_{n \geq 1}$ themselves have a limit, which can be expressed as an optimization problem in the space of graphons (the continuum limit of graphs \cite{Lov09}). Lubetzky and Zhao \cite{LZ-dense} analyzed these variational problems, and identified the precise region of {\it replica symmetry} (set of all points $(p, r)$ where the optimization problem is uniquely minimized by the constant function $r$) for regular graphs $F$. The validity of \eqref{eq:phi} for the number of arithmetic progressions in a random set, and properties of the associated variational problem was established in \cite{BGSZ}.  Recently, Dembo and Lubetzky \cite{dembo2018large} derived the large deviations for subgraph counts in the uniform random graph model $G(n, m)$ (the uniform distribution over graphs with $n$ vertices and $m$ edges). Here, the variational problem in the rate function coincide with those studied in statistical physics in the context of constrained random graphs, where symmetry breaking configurations are often attained by block (`multi-podal') graphons (see \cite{kenyon2017multipodal,kenyon2017phase,radin2018symmetry} and the references therein).


For a general hypergraph sequence the results in \cite{CD16,eldan} can be applied to obtain different sufficient conditions on $\{H_n\}_{n\geq 1}$ for which the approximation \ref{eq:phi} holds. For instance, Bri\"et and Gopi \cite{Briet} computed the Gaussian-width a general multilinear polynomial, which combined with \cite[Theorem 5]{eldan} gives the following sufficient condition for a sequence of $s$-uniform hypergraphs $\{H_n\}_{n \geq 1}$ to be mean-field:\footnote{The first condition in \eqref{eq:suniform} controls the Gaussian width of the gradient of the function $\bm x \rightarrow t(H, \bm x)$ \cite[Corollary 6.1]{Briet}, while the second condition controls the Lipschitz constant. To verify the mean-field condition using \cite[Theorem 5]{eldan}, one also needs to check a few technical conditions related to the continuity of the variational problem, all of which can be easily verified when \eqref{eq:suniform} holds.} 
\begin{align}\label{eq:suniform}
\Delta_2(H_n) \ll  \frac{|E(H_n)|}{|V(H_n)|^{2-\frac{1}{2\ceil{\frac{s-1}{2}}}} \sqrt{\log |V(H)|}} \quad \text{and} \quad \Delta_1(H_n)=O\left(\frac{|E(H_n)|}{|V(H_n)|}\right), 
\end{align}
where $\Delta_1(H_n)$ and $\Delta_2(H_n)$ are the maximum degree and the maximum co-degree of $H_n$, respectively.\footnote{For a $s$-uniform hypergraph $H$, the degree of a vertex $v \in V(H)$ (to be denoted by $d_H(v)$) is the number of hyperedges in $H$ containing $v$, and the maximum degree $\Delta_1(H):=\max_{v \in V(H)} d_H(v)$. Similarly, for $u, v \in H$, the co-degree of $u, v$ (denoted by $d_H(u, v)$) is the number of hyperedges in $H$ containing both $u$ and $v$, and the maximum co-degree $\Delta_2(H):=\max_{u, v \in V(H)} d_H(u, v)$.} 
These assumptions are satisfied for a variety of hypergraphs, from dense $s$-uniform hypergraphs (where $E(H_n) =\Theta(|V(H_n)|^s)$) to much sparser hypergraphs such as the $r$-AP counting hypergraph (where $E(H_n) =\Theta(|V(H_n)|^2)$. 
The condition in \eqref{eq:suniform} can be significantly improved for graphs (2-uniform hypergraphs), where the mean-field condition has been extensively studied and well-understood \cite{BM17,eldan}. In this case, a sufficient condition for sequence of graphs $\{G_n\}_{n \geq 1}$ to be mean-field is 
\begin{align}\label{eq:graph}
|E(G_n)| \gg |V(G_n)| \quad \text{and} \quad  \Delta_1(G_n)=O\left(\frac{|E(G_n)|}{|V(G_n)|}\right),
\end{align}
that is, the graph $G_n$ is not `too sparse' (number of edges is much larger than the number of vertices) and not `too irregular' (maximum degree is of the same order as the average degree). In the appendix (Proposition \ref{ppn:ut_hypergraph}), we show that 
\begin{align}\label{eq:edge_hypergraph}
|E(H_n)| \gg |V(H_n)|^{s-1} \quad \text{and} \quad  \Delta_1(H_n)=O\left(\frac{|E(H_n)|}{|V(H_n)|}\right), 
\end{align}
is another simple sufficient condition for a sequence of $s$-uniform hypergraphs to be mean-field for the upper tail problem. The conditions in \eqref{eq:suniform} and \eqref{eq:edge_hypergraph} are, in general, incomparable. However, there are cases where \eqref{eq:edge_hypergraph} improves upon \eqref{eq:suniform} (see Example \ref{example:edge_condition} in Appendix \ref{sec:ut_hypergraph}). Moreover, \eqref{eq:edge_hypergraph} recovers the condition for graphs (\ref{eq:graph}) as a special case.

\begin{remark} 
In the approximation \eqref{eq:phi} one can often allow the sparsity parameter $p=p(n)$, to go to zero with $n$, at appropriate rates. Determining the optimal dependence on the sparsity parameter for a specific sequence of hypergraphs is, in general, a challenging problem. For example, for upper tails of subgraphs in the Erd\H os-R\'enyi random graph $G(n, p)$, Chatterjee and Dembo \cite[Theorem 1.2]{CD16} established the validity of \eqref{eq:phi}, using their notion of gradient complexity, for certain regimes of the sparsity parameter $p$. The dependence on $p$ was later improved by Eldan \cite{eldan}, and, very recently, Cook and Dembo \cite{CD18} and Augeri \cite{augeri1}, independently and simultaneously, established \eqref{eq:phi} for cycle counts in $G(n, p)$, under almost optimal sparsity conditions. The associated variational problems in the sparse regime was precisely analyzed in \cite{BGLZ,LZ-sparse}.
\end{remark}

In this paper, we study asymptotic properties of the variational problem \eqref{varprob} for a sequence $s$-uniform hypergraphs $\{H_n\}_{n \geq 1}$ in the fixed $p \in (0, 1)$ regime, assuming the mean-field approximation \eqref{eq:phi} holds. The following is the summary of our results:
\begin{itemize}

\item[(1)] In the fixed $p$ regime, it is notoriously difficult to solve the variational problem \eqref{varprob} explicitly for a general sequence of hypergraphs. Instead, one searches for the region of {\it replica symmetry}, that is, the set of $(p, r)$ for which the variational problem $\phi_{H_n}(p, r)$ is uniquely minimized by the constant vector $(r, r, \ldots, r) \in (0,1)^{|V(H_n)|}$ (recall $0 < p  < r < 1$). We show in Theorem \ref{repsym} that any sequence of {\it regular} $s$-uniform hypergraphs has a (universal) region of replica symmetry, which depends only on $s$, 
but not on the specific choice of the hypergraphs. Moreover, in  a sense to be made precise below, the replica symmetry region we identify for regular $s$-uniform hypergraphs is tight.

\item[(2)] We also analyze the variational problem arising in the motif frequency estimation problem, for a converging sequence of dense graphs (Section \ref{sec:dense}). In this case, the variational problems themselves have a limit which can be expressed, using the limiting graphon, as an optimization problem over the space of functions from $[0, 1] \rightarrow [0, 1]$ (Theorem \ref{nfree}), giving the {\it exact Bahadur slope} \cite{bahadur} of the corresponding estimate. 

\end{itemize}

\subsection{ Replica Symmetry for Regular Hypergraphs}

We begin with the following theorem, which identifies the precise region of universal replica symmetry for regular $s$-uniform hypergraphs. To this end, recall the definition of the mean-field variational problem $\phi_{H}(r, p)$ from \eqref{varprob}. 

\begin{thm}\label{repsym} Fix $0 < p < r < 1$. Then the following hold: 

\begin{enumerate}[(a)]

\item (Replica Symmetry) Suppose $\{H_n\}_{n \geq 1}$ is a sequence of regular $s$-uniform  hypergraphs. If the point $\left(r^s , I_p(r)\right)$ lies on the convex minorant of the function $x \mapsto I_p(x^{\frac{1}{s}})$, then $\phi_{H_n}(r, p) = I_p(r)$. Moreover, if $\{H_n\}_{n \geq 1}$ is a sequence of mean-field, regular $s$-uniform  hypergraphs, then 
$$\lim_{n\rightarrow \infty} \frac{1}{|V(H_n)|}\log \mathbb{P}\Big(T_p(H_n)\geq r^s ~\mathbb{E}\left( T_p(H_n) \right)\Big)  = -I_p(r).$$

\item (Replica Symmetry Breaking) If the point $\left(r^s , I_p(r)\right)$ does not lie on the convex minorant of  $x \mapsto I_p(x^{\frac{1}{s}})$, then there exists a sequence of mean-field, regular  $s$-uniform hypergraphs $\{H_n\}_{n \geq 1}$ (depending on $p$ and $r$) such that $\phi_{H_n}(r, p) < I_p(r)$, and 
$$\liminf_{n\rightarrow \infty} \frac{1}{|V(H_n)|}\log \mathbb{P}\Big(T_p(H_n)\geq r^s ~\mathbb{E}\left( T_p(H_n) \right)\Big)  > -I_p(r).$$
\end{enumerate}
\end{thm}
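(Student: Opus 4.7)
\medskip
\noindent\textbf{Proof plan.} The plan is to handle the two parts separately by a convex-analytic argument combined with an application of AM-GM that exploits $d$-regularity.

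For part (a), I would begin with the easy upper bound: the constant vector $\bm{x}=(r,\ldots,r)$ is feasible for the variational problem since $t(H_n,\bm x)=r^s|E(H_n)|$, giving $\phi_{H_n}(r,p)\le I_p(r)$. For the matching lower bound, set $\psi(y):=I_p(y^{1/s})$ and let $\psi_{**}$ be its convex minorant on $[0,1]$. Since $\psi\ge 0$ with equality only at $y=p^s$, we get $\psi_{**}(p^s)=0$, while by hypothesis $\psi_{**}(r^s)=I_p(r)>0$. Monotonicity of subgradients of a convex function then forces any subgradient $\beta$ of $\psi_{**}$ at $r^s$ to satisfy $\beta\ge (I_p(r)-0)/(r^s-p^s)>0$. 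The affine function $L(y):=I_p(r)+\beta(y-r^s)$ lies below $\psi_{**}$, hence below $\psi$, on $[0,1]$, so $I_p(x)\ge L(x^s)$ for every $x\in[0,1]$. This reduces the lower bound on $\phi_{H_n}(r,p)$ to showing $\tfrac{1}{|V(H_n)|}\sum_v x_v^s \ge r^s$ whenever $\bm x$ is feasible.

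The key step here is the AM-GM inequality $\prod_{v\in e} x_v \le \tfrac{1}{s}\sum_{v\in e} x_v^s$, which upon summing over edges of a $d$-regular $s$-uniform hypergraph and invoking the identity $d|V(H_n)|=s|E(H_n)|$ gives
\[
\sum_{e\in E(H_n)} \prod_{v\in e} x_v \;\le\; \frac{|E(H_n)|}{|V(H_n)|}\sum_{v\in V(H_n)} x_v^s.
\]
Feasibility thus forces $\sum_v x_v^s \ge r^s|V(H_n)|$, and combining with the linear bound and $\beta>0$ yields $\phi_{H_n}(r,p)\ge I_p(r)$. The log-probability asymptotic then follows immediately from the mean-field hypothesis \eqref{eq:phi}.

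For part (b), the assumption that $(r^s,I_p(r))$ lies strictly above $\psi_{**}$ produces $y_1<r^s<y_2$ in $[0,1]$ and $\lambda\in(0,1)$ with $\lambda y_1+(1-\lambda)y_2=r^s$ and $\lambda\psi(y_1)+(1-\lambda)\psi(y_2)<I_p(r)$. Set $x_i:=y_i^{1/s}$. Take $H_n$ to be the vertex-disjoint union of $N=N(n)$ copies of the complete $s$-uniform hypergraph $K_m^{(s)}$, with $m=m(n)$ chosen so that $N,m\to\infty$ and $m\gg N^{s-2}$; this is $\binom{m-1}{s-1}$-regular, and a direct check against the sufficient condition \eqref{eq:edge_hypergraph} (using $|V(H_n)|=Nm$, $|E(H_n)|=N\binom{m}{s}$) shows it is mean-field. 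Assign $x_1$ to all vertices in $\lfloor\lambda N\rfloor$ of the cliques and $x_2$ to the rest; then $t(H_n,\bm x)=(\lambda y_1+(1-\lambda)y_2)N\binom{m}{s}+o(|E(H_n)|)=r^s|E(H_n)|(1+o(1))$, and after a negligible perturbation to restore strict feasibility the cost is $\lambda I_p(x_1)+(1-\lambda)I_p(x_2)+o(1)<I_p(r)$, giving $\limsup_n \phi_{H_n}(r,p)<I_p(r)$. Invoking mean-field once more yields the strict inequality on the liminf of log-probabilities.

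The main obstacle is the lower bound in part (a), specifically isolating a single scalar quantity ($\tfrac{1}{|V|}\sum_v x_v^s$) that is simultaneously controlled by the feasibility constraint (via AM-GM + regularity) and by the entropy cost (via the convex minorant). Once that pivot is in place, both parts follow cleanly.
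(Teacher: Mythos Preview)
Your proof is correct and follows the same overall architecture as the paper: for (a), reduce via the convex minorant of $x\mapsto I_p(x^{1/s})$ to the scalar inequality $\tfrac{1}{|V(H)|}\sum_v x_v^s \ge r^s$ on the feasible set, and for (b), break symmetry with a two-valued assignment on a disjoint union of complete $s$-uniform hypergraphs.

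There are two differences in execution worth recording. First, the paper proves the key inequality $\tfrac{1}{|E(H)|}t(H,\bm x)\le \tfrac{1}{|V(H)|}\sum_v x_v^s$ (their Lemma~\ref{lm:ds}) by minimizing $\eta(\bm x)=\tfrac{d}{s}\sum_v x_v^s - t(H,\bm x)$ and analyzing critical points; your AM--GM argument $\prod_{v\in e}x_v\le \tfrac{1}{s}\sum_{v\in e}x_v^s$ summed over edges is more direct and avoids any calculus. The paper then applies Jensen to the convex minorant $\hat J_p$ and uses that $\hat J_p$ is increasing on $[p^s,1]$, while you linearize with a positive subgradient at $r^s$; these are equivalent. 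Second, for (b) the paper fixes a number $M$ of clique blocks (depending on $p,r$) and pre-perturbs $\lambda$ to $\lambda_0$ so that the rounding $K=\lfloor M\lambda_0\rfloor$ still gives a strictly feasible vector and a strictly smaller cost for every $n$; you instead let the number $N(n)$ of blocks tend to infinity and absorb the rounding as an $o(1)$ correction, which yields the strict inequality only for all large $n$. Both routes suffice for the statement, but if you want $\phi_{H_n}(r,p)<I_p(r)$ for \emph{every} $n$ as literally asserted, the paper's pre-perturbation of $\lambda$ is the cleaner device and replaces your ``negligible perturbation to restore strict feasibility'' with an explicit one-line choice.
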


The proof of this theorem is given in Section \ref{sec:repsym}.  It shows that the variational problem \eqref{varprob} has a region of replica symmetry for any regular $s$-uniform hypergraph, which is determined by the convex minorant of the function $x \rightarrow I_p(x^\frac{1}{s})$, and this region is tight over the class of all regular $s$-uniform hypergraphs: For a regular $s$-uniform hypergraph $H$ denote by 
\begin{align}\label{eq:RH}
\cR(H):=\{(p, r): 0 < p< r< 1 \text{ and } \phi_{H}(r, p)=I_p(r)\},
\end{align} 
the set of all points where replica symmetry is preserved. Then the theorem above can be re-stated as: 
$$\bigcap_{H \in \cH_s} \cR(H)= \cC_s,$$
where $\cH_s$ is the collection of all regular $s$-uniform hypergraphs and $\cC_s$ is the set of all $(p,r)$ such that $0 < p< r< 1$ and $(r^s , I_p(r))$  lies on the convex minorant of the function $x \rightarrow I_p(x^{\frac{1}{s}})$.\footnote{Note that Theorem \ref{repsym} does not construct a regular $s$-uniform hypergraph $H$ for which $\cR(H)=\cC_s$, since the symmetry breaking construction depends on $p$ and $r$. We are only able to obtain such a construction when $r^s$ is restricted to be on the concave part of the function $x \rightarrow I_p(x^{\frac{1}{s}})$ (see Example \ref{example_concave} for details).} On the other hand, in Example \ref{bipartite}, we construct a sequence of irregular graphs which exhibit symmetry breaking everywhere, showing that the regularity assumption on the hypergraphs in Theorem \ref{repsym} is necessary for obtaining a  universal region of symmetry.

\begin{remark} For a specific sequence of hypergraphs the region of replica symmetry might be larger. For instance, if $H_n$ is the sequence of complete $s$-uniform hypergraph on $n$ vertices, it is easy to check that replica symmetry is preserved for all $0 < p < r < 1$. Another example is the replica symmetry region in the upper tails of subgraphs in the Erd\H os-Renyi random graph $G(n, p)$. To this end, given a fixed connected graph $F=(V(F), E(F))$, recall the definition of the $F$-counting hypergraph $H_n(F)$ from above: the vertex set of $H_n(F)$ is the edge set of the complete graph $K_n$, and the edge set of $H_n(F)$ is the collection of the edge sets of all copies of $F$ in $K_n$. Then $T_p(H_n(F))=N(F,  G(n, p))$, the number of copies of $F$ in $G(n, p)$. Lubetzky and Zhao \cite{LZ-dense} studied the replica symmetry region in the upper tail variational problem for $N(F,  G(n, p))$. It follows from their results that $\cR(H_n(F))\supseteq \cC_{\Delta(F)}$,  where $\Delta(F)$ is the maximum degree of $F$.\footnote{This is the exact replica symmetric region when $F$ is $D$-regular, that is, $\cR(H_n(F))= \cC_{D}$ (\cite[Theorem 1.1]{LZ-dense}).} On the other hand, Theorem \ref{repsym} above shows that  $\cR(H_n(F))\supseteq \cC_{|E(F)|}$, since $H_n(F)$ is a regular $|E(F)|$-uniform hypergraph. (Note that $\cC_{\Delta(F)} \supset \cC_{|E(F)|}$, unless $\Delta(F)=|E(F)|$, in which case $F$ is a star-graph and the two regions are the same.) 
\end{remark}

\subsection{Subgraphs in Dense Graphs} 
\label{sec:dense}

In this section, we explore the application of the general framework introduced above, in counting subgraphs of vertex-percolated graphs. Given $0 < p < 1$ and a graph $G=(V(G), E(G))$ the {\it vertex-percolated graph} $G[p]$  is the random induced subgraph $G[S]$,\footnote{Given a graph $G=(V(G), E(G))$ and $S \subseteq V(G)$, $G[S]$ denotes the induced subgraph of $G$ on the set $S$.} where $S$ is obtained by including every element of $V(G)$ independently with probability $p$. For a fixed graph $H=(V(H), E(H))$, denote by $T_p(H, G)$ the number of copies of the graph $H=(V(H), E(H))$ in $G[p]$. More formally, 
\begin{align}\label{eq:TpHG}
T_p(H, G) :=\frac{1}{|Aut(H)|}\sum_{\bm u \in V(G)_{|V(H)|}} X_{\bm u}\prod_{(a,b) \in E(H)}a_{u_a u_b}(G),
\end{align}
where:  
\begin{itemize}
          \item[--] $X_1,X_2,\cdots,X_{|V(G)|}$ are i.i.d. $\mathrm{Ber}(p)$ and $X_{\bm u} := \prod_{j=1}^{|V(H)|} X_{u_j}$, 
	 \item[--] $A(G)=((a_{uv}(G)))$ is the adjacency matrix of $G$,
	\item[--] $ V(G)_{|V(H)|}$ is the set of all $|V(H)|$-tuples ${\bm u}=(u_1,\cdots, u_{|V(H)|})\in  V(G)^{|V(H)|}$ with distinct indices.\footnote{For a set $S$, the set $S^N$ denotes the $N$-fold cartesian product $S\times S \times \cdots \times S$.} Thus, the cardinality of $V(G)_{|V(H)|}$ is $\frac{|V(G)|!}{(|V(G)|-|V(H)|)!}$, 
	\item[--] $Aut(H)$ is the {\it automorphism group} of $H$, that is, the group of permutations $\sigma$ of the vertex set $V(H)$ such that $(x, y) \in E(H)$ if and only if $(\sigma(x), \sigma(y)) \in E(H)$.
\end{itemize}  
Note that 
\begin{align}\label{eq:ETpHG}
\E(T_p(H, G)) =\frac{p^{|V(H)|}}{|Aut(H)|}\sum_{\bm u \in V(G)_{|V(H)|}} \prod_{(a,b) \in E(H)}a_{u_a u_b}(G) = p^{|V(H)|} N(H,G),
\end{align}
where $N(H, G)$ is the number of copies of $H$ in $G$.

\begin{remark}\label{rmk:estimation}  As mentioned before, the statistic \eqref{eq:TpHG} arises in the problem of estimating motif frequencies in large graphs \cite{KW}. Here, given a large graph $G_n$, the goal is to efficiently (without storing or searching over the entire graph) estimate motif frequencies (subgraph counts) of $G_n$, by making local queries on $G_n$. Klusowski and Wu \cite{KW} proposed the subgraph sampling model, where one has access to the random induced subgraph $G_n[S]$, where $S \subset V(G_n)$ is obtained by sampling each vertex in $V(G_n)$ independently with probability $p$. In this model, by \eqref{eq:ETpHG} above, $\frac{1}{p^{|V(H)|}}T_p(H, G_n)$ is an unbiased estimate of the subgraph count $N(H, G_n)$. 
 \end{remark}

The large deviation tail probabilities for $T_p(H, G_n)$ can be derived using the general theory described above. To see this, note that $T_p(H, G_n)$ can be rewritten as \eqref{eq:TpH} by defining the $|V(H)|$-uniform hypergraph which has vertex set $V(G_n)$ and a hyperedge for every copy of $H$ in $G_n$.\footnote{Technically this is a `multi-hypergraph', because a subset of $V(H)$ vertices might contain several copies of $H$ in $G_n$. However, the general theory can be easily modified to include hypergraphs with multiple edges.}  In this section, we show that the large deviation variational problem for $T_p(H, G_n)$ itself has a limit, when $G_n$ is a converging sequence of dense graphs. We begin with some preliminaries on graph limit theory. The formal statement of the result is given in Section \ref{sec:denseI}.

\subsubsection{Graph Limit Theory Preliminaries}
\label{sec:denseI} 
The theory of graph limits developed by Lov\' asz and coauthors \cite{Lov09} has received phenomenal attention over the last few years. It  connects various topics such as graph homomorphisms, Szemeredi's regularity lemma, and quasirandom graphs, and has found many interesting applications 
in statistical physics, extremal graph theory, statistics  and related areas (see \cite{BBS11,bhamidi2018weighted,CD13,radin2018symmetry,radin2013phase,yin2016,yin2016asymptotic}  and  the references therein). Here we recall the basic definitions about the convergence of graph sequences. If $F$ and $G$ are two graphs, denote the homomorphism density of $F$ into $G$ by $t(F,G) :=\frac{|\hom(F,G)|}{|V (G)|^{|V (F)|}}$,   
where  $|\hom(F,G)|$ denotes the number of homomorphisms of $F$ into $G$.

%


To define the continuous analogue of graphs, consider $\cW$ to be the space of all measurable functions from $[0, 1]^2$ into $[0, 1]$ that satisfy $W(x, y) = W(y,x)$, for all $x, y$. For a simple graph $F$ with $V (F)= \{1, 2, \ldots, |V(F)|\}$, let
\begin{align}\label{eq:tHW}
t(F,W) =\int_{[0,1]^{|V(F)|}}\prod_{(i,j)\in E(F)}W(x_i,x_j) \mathrm dx_1\mathrm dx_2\cdots \mathrm dx_{|V(F)|}.
\end{align}
\begin{defn}\cite{BCLSV08,BCLSV12,Lov09}\label{defn:graph_limit} A sequence of graphs $\{G_n\}_{n\geq 1}$ is said to {\it converge to $W \in \cW$}, if for every finite simple graph $F$, 
\begin{equation}
\lim_{n\rightarrow \infty}t(F, G_n) = t(F, W).
\label{eq:graph_limit}
\end{equation}
\end{defn}

The limit objects, that is, the elements of $\cW$, are called {\it graph limits} or {\it graphons}. A finite simple graph $G=(V(G), E(G))$ can also be represented as a graphon in a natural way: Define $W_G(x, y) =\boldsymbol 1\{(\ceil{|V(G)|x}, \ceil{|V(G)|y})\in E(G)\}$, that is, partition $[0, 1]^2$ into $|V(G)|^2$ squares of side length $1/|V(G)|$, and let $W_G(x, y)=1$ in the $(i, j)$-th square if $(i, j)\in E(G)$, and 0 otherwise. Observe that $t(F, W_G) = t(F,G)$ for every simple graph $F$ and therefore the constant sequence $G$ converges to the graph limit $W_G$. It turns out that the notion of convergence in terms of subgraph densities outlined above can be suitably metrized using the so-called {\it cut distance} (see Section \ref{nfreeproof} for details).

\subsubsection{Large Deviations of Subgraph Counts in Vertex-Percolated Dense Graphs}
\label{sec:denseII}

Consider a sequence of graphs $\{G_n\}_{n \geq 1}$ converging to a graphon $W \in \cW$ and a fixed graph $H$. In this case, the limit of the upper tail large deviation probability for $T_p(H, G_n)$ can be described as a variational problem in the space of graphons. To this end,  given a measurable function $h: [0,1] \mapsto [0,1]$, a fixed graph $H$, and graphon $W \in \cW$, define 
\begin{align}\label{eq:tHWh}
t(H,W,h) := \int\limits_{[0,1]^{|V(H)|}} \prod_{(i,j) \in E(H)} W(x_i,x_j) \prod_{i=1}^{|V(H)|} h(x_i)~\mathrm  d x_1 \cdots \mathrm  d x_{|V(H)|}.
\end{align}
(Note that when $h:=1$ is the constant function 1, $t(H,W,h) = t(H, W)$, as defined in \eqref{eq:tHW}.)

\begin{thm}\label{nfree}
Suppose $\{G_n\}_{n \geq 1}$ is a sequence of graphs converging to a graphon $W$ and $H=(V(H), E(H))$ is a fixed graph satisfying $t(H,W) > 0$. Then, for $0 < p  < r  < 1$, 
\begin{equation}\label{infvar}
\lim \limits_{n\rightarrow \infty} \frac{1}{|V(G_n)|} \log\P(T_p(H, G_n) \geq r^{|V(H)|} N(H, G_n)) = - \psi_2(H, W, r, p),
\end{equation}
with
\begin{align}
\label{eq:W}
\psi_2(H, W, r, p):=\inf\limits_{h : [0,1] \mapsto [0,1]} \left\{\int_0^1 I_p(h(x)) \mathrm d x : t(H,W,h) \geq r^{|V(H)|} t(H,W)\right\},
\end{align}
where the infimum is taken over the set of all measurable functions $h: [0,1]\mapsto [0,1]$. 
\end{thm}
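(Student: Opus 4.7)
The plan is to recognize $T_p(H, G_n)$ as an instance of \eqref{eq:TpH} for a suitable hypergraph sequence, apply the mean-field reduction \eqref{eq:phi}, and then pass to the graphon limit inside the resulting variational problem. Let $H_n^\star$ denote the $|V(H)|$-uniform multi-hypergraph on vertex set $V(G_n)$ whose hyperedges are the copies of $H$ in $G_n$, so that $T_p(H,G_n)=T_p(H_n^\star)$ and $|E(H_n^\star)|=N(H,G_n)$. Because $t(H,W)>0$ and $G_n\to W$, we have $|E(H_n^\star)|=\Theta(|V(G_n)|^{|V(H)|})$, and the maximum degree and codegree of $H_n^\star$ are of the expected orders $\Theta(|V(G_n)|^{|V(H)|-1})$ and $O(|V(G_n)|^{|V(H)|-2})$ respectively, so the sufficient conditions \eqref{eq:suniform} apply and $\{H_n^\star\}_{n\geq 1}$ is mean-field. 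It therefore suffices to show
\[
\lim_{n\to\infty}\phi_{H_n^\star}(r,p) = \psi_2(H,W,r,p).
\]

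To compare the two variational problems, to each $\bm x\in [0,1]^{|V(G_n)|}$ associate the step function $h_{\bm x}(t):=x_{\lceil |V(G_n)|t\rceil}$. The entropic objectives agree exactly: $\frac{1}{|V(G_n)|}\sum_v I_p(x_v)=\int_0^1 I_p(h_{\bm x}(t))\,\mathrm d t$. For the constraint, a direct computation gives $t(H_n^\star,\bm x)/|V(G_n)|^{|V(H)|}=t(H,W_{G_n},h_{\bm x})/|Aut(H)|+O(1/|V(G_n)|)$, while $|E(H_n^\star)|/|V(G_n)|^{|V(H)|}\to t(H,W)/|Aut(H)|$ by graphon convergence applied to $H$ itself. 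Thus the constraint $t(H_n^\star,\bm x)\geq r^{|V(H)|}|E(H_n^\star)|$ corresponds asymptotically to $t(H,W,h)\geq r^{|V(H)|}t(H,W)$, precisely the constraint in \eqref{eq:W}.

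For the upper bound $\limsup\phi_{H_n^\star}(r,p)\leq \psi_2(H,W,r,p)$, fix a feasible $h$ for $\psi_2$ with strict inequality, and set $x_v^{(n)}:=|V(G_n)|\int_{(v-1)/|V(G_n)|}^{v/|V(G_n)|}h(t)\,\mathrm d t$. Then $h_{\bm x^{(n)}}\to h$ in $L^1$, and the joint continuity of $t(H,\cdot,\cdot)$ in $(W,h)$ under cut-metric convergence of the graphon and $L^1$-convergence of the vertex weights (a standard Lov\'asz--Szegedy-type counting lemma) implies the constraint is eventually satisfied. Bounded convergence gives $\int I_p(h_{\bm x^{(n)}})\to\int I_p(h)$, yielding the bound for strictly feasible $h$. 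General feasible $h$ are then handled by mixing with a constant function $r'\in(r,1)$ to produce strictly feasible perturbations with arbitrarily small entropy loss.

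For the lower bound $\liminf\phi_{H_n^\star}(r,p)\geq \psi_2(H,W,r,p)$, let $\bm x^{(n)}$ be near-optimal and view the pair $(G_n,\bm x^{(n)})$ as a \emph{decorated graphon}, namely $W_{G_n}$ equipped with the $[0,1]$-valued vertex weight $h_{\bm x^{(n)}}$. A joint compactness argument, extending the Lov\'asz--Szegedy compactness of graphons in cut metric to this decorated setting, yields a relabeling of the vertices of each $G_n$ and a subsequence along which $W_{G_n}\to W$ in cut metric and $h_{\bm x^{(n)}}\to h^\star$ in $L^1$ simultaneously, for some measurable $h^\star\colon[0,1]\to[0,1]$. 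Joint continuity of $t(H,\cdot,\cdot)$ then gives feasibility of $h^\star$ for $\psi_2$, while convexity of $I_p$ together with the bounded convergence theorem yields $\int I_p(h^\star)\leq \liminf\int I_p(h_{\bm x^{(n)}})=\liminf\phi_{H_n^\star}(r,p)$. The main obstacle is this simultaneous compactness: the relabeling witnessing $G_n\to W$ must be coordinated with the vertex decorations $\bm x^{(n)}$, which is the essential technical step and requires extending the graph-limit framework of \cite{Lov09} to vertex-weighted (decorated) graphons.
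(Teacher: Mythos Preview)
Your overall architecture---reduce to the mean-field variational problem via \eqref{eq:suniform}, then pass to the graphon limit---matches the paper's, and your upper bound is essentially the same. The lower bound, however, is handled very differently, and the paper's route is both simpler and avoids the technical issue you yourself flag as ``the essential technical step.''

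The paper's key observation is that the discrete variational problem $\phi_{H_n^\star}(r,p)$ is invariant under relabeling the vertices of $G_n$. One may therefore fix, \emph{once and for all}, a labeling of each $G_n$ so that $\|W_{G_n}-W\|_\square\to 0$ (not merely $\delta_\square\to 0$). With this done, the standard telescoping counting lemma (Lemma~\ref{borgsthm} in the paper) gives
\[
\sup_{h:[0,1]\to[0,1]}\bigl|t(H,W_{G_n},h)-t(H,W,h)\bigr|\longrightarrow 0,
\]
uniformly over \emph{all} weight functions $h$. Because of this uniformity, the constraint involving $W_{G_n}$ in the discrete problem can be replaced by the constraint involving $W$, with an $o(1)$ slack that does not depend on the near-minimizer $h_n$. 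The lower bound then reduces to straightforward approximation of measurable functions by step functions and a continuity-in-$r$ argument for the variational value. No compactness in the decoration $h$ is ever invoked; in particular, the decorated-graphon machinery is unnecessary.

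Your proposed lower bound, by contrast, extracts a joint subsequential limit of $(W_{G_n},h_{\bm x^{(n)}})$. This could be made to work through a genuine decorated-graphon compactness theorem, but as written the details are not right: such compactness does not yield $L^1$ convergence of the vertex weights after relabeling (consider highly oscillatory $h_n$), and the appeal to ``bounded convergence'' for the entropy is therefore unjustified. What decorated compactness actually delivers is convergence in a decorated cut metric, under which $t(H,W,h)$ is continuous and $\int I_p(h)$ is continuous (since it depends only on the pushforward $h_*\lambda$). So your approach is salvageable, but it requires building or citing that framework rather than the $L^1$/bounded-convergence argument you sketch---whereas the paper's uniform counting lemma bypasses all of this.
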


The proof of this result and various examples are given in Section \ref{nfreeproof}. Note that the assumption $t(H,W) > 0$ implies that the limiting density of $H$ in $G_n$ is non-vanishing, and ensures, among other things, that the mean-field assumptions are satisfied. This also gives the exact Bahadur slope \cite{bahadur} of the unbiased estimate $\frac{1}{p^{|V(H)|}}T_p(H, G_n)$ (recall Remark \ref{rmk:estimation}) for the subgraph count $N(H, G_n)$, in the subgraph-sampling model (see Remark \ref{rmk:2sided} for details).

\section{Proof of Theorem \ref{repsym}}
\label{sec:repsym}

\noindent\textbf{\textit{Proof of Replica-Symmetry}}: The proof for the replica symmetry case relies on the following lemma. 

\begin{lem}\label{lm:ds} For any $s$-uniform $d$-regular hypergraph $H=(V(H), E(H))$, and $\bm x=(x_1, x_2,$ $\ldots, x_{|V(H)|}) \in [0, 1]^{|V(H)|}$,  $\frac{1}{|E(H)|}t(H, \bm x) \leq \frac{1}{|V(H)|} \sum_{v=1}^{|V(H)|} x_v^s$, where $t(H, \bm x)$ is as in Definition \ref{defn:approx}. 
\end{lem}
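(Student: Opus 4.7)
\noindent The plan is to use the AM-GM inequality edgewise and then collapse the resulting sum using the regularity condition via the standard degree-counting identity.

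First I would observe that by the AM-GM inequality (equivalently, the comparison between geometric mean and the $L^s$ power mean) applied to the nonnegative numbers $x_v^s$ for $v \in e$, we have for every hyperedge $e \in E(H)$,
\begin{equation*}
\prod_{v \in e} x_v \;=\; \Bigl(\prod_{v \in e} x_v^s\Bigr)^{1/s} \;\leq\; \frac{1}{s}\sum_{v \in e} x_v^s.
\end{equation*}
Summing this inequality over all $e \in E(H)$ gives
\begin{equation*}
t(H,\bm x) \;=\; \sum_{e \in E(H)} \prod_{v \in e} x_v \;\leq\; \frac{1}{s}\sum_{e \in E(H)}\sum_{v \in e} x_v^s \;=\; \frac{1}{s}\sum_{v \in V(H)} d_H(v)\, x_v^s,
\end{equation*}
where the last equality is obtained by switching the order of summation and using the definition of $d_H(v)$.

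Next I would use the hypothesis that $H$ is $d$-regular, so $d_H(v) = d$ for every $v \in V(H)$, to pull the degree out of the sum and obtain
\begin{equation*}
t(H,\bm x) \;\leq\; \frac{d}{s}\sum_{v \in V(H)} x_v^s.
\end{equation*}
Finally, the standard handshake-style double counting of incidences yields $s|E(H)| = d|V(H)|$, so $\frac{d}{s} = \frac{|E(H)|}{|V(H)|}$, giving the claim after dividing both sides by $|E(H)|$.

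There is no real obstacle here; the only thing to be careful about is that AM-GM is applied to the nonnegative quantities $x_v^s$ (valid since $x_v \in [0,1] \subset [0,\infty)$), and that the regularity hypothesis is used exactly once to replace the vertex-dependent degrees $d_H(v)$ by the common value $d$. The argument also makes clear where the inequality is tight: when all $x_v$ are equal, in which case AM-GM becomes an equality at every edge.
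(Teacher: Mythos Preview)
Your proof is correct and in fact cleaner than the paper's. The paper proves the same inequality by a variational argument: it sets $\eta(\bm x)=\tfrac{d}{s}\sum_v x_v^s - t(H,\bm x)$, picks a minimizer $\bm z\in[0,1]^{|V(H)|}$ by compactness, writes down the first-order conditions $d z_v^{s-1}=\sum_{e\ni v}\prod_{u\in e\setminus\{v\}}z_u$ at the interior coordinates, and then uses these to show $\eta(\bm z)\ge 0$ after accounting for the boundary coordinates. Your route avoids all of this calculus: a single edgewise AM--GM bound $\prod_{v\in e}x_v\le \tfrac{1}{s}\sum_{v\in e}x_v^s$, summed and reindexed, gives the inequality directly once regularity replaces $d_H(v)$ by $d$ and the handshake identity $s|E(H)|=d|V(H)|$ is invoked. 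What the paper's approach buys is a template that could in principle handle more exotic objective comparisons where an elementary convexity or AM--GM step is not available; what your approach buys is a two-line argument with a transparent equality case (all $x_v$ equal), which is exactly what is needed downstream for the replica-symmetry conclusion.
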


\begin{proof} Note that it suffices to show $t(H,\bm x) \leq \frac{d}{s} \sum_{v=1}^{|V(H)|} x_v^s$, for $\bm x \in [0,1]^{|V(H)|}$, since $|E(H)|=\frac{d}{s}|V(H)|$. To  this end, define the function, $$\eta(\bm x) = \frac{d}{s} \sum_{v=1}^{|V(H)|} x_v^s -  t(H, \bm x),$$
This is a smooth function on the compact set $[0, 1]^{|V(H)|}$, and hence the minimum of $\eta(\cdot)$ is attained at some point $\bm z \in [0,1]^{|V(H)|}$. Therefore, to prove the lemma, it suffices to show that $\eta(\bm z) \geq 0$. 

To show this, let $S:= \{v \in V(H): z_v \in (0,1)\}$. Then, $\frac{\partial}{\partial z_v}\eta(\bm z) = 0$, for all $v \in S$. This implies, 
\begin{equation}\label{diffres}
d {z_v}^{s-1} = \sum_{e\in E(H): v \in e} ~\prod_{u \in e\setminus\{v\}} z_u, \quad \textrm{for all}~v \in S.
\end{equation}
Hence, using \eqref{diffres}, 
$$\eta(\bm z) = \frac{1}{s} \sum_{v=1}^{|V(H)|} \left(d {z_v}^s - \sum_{e\in E(H): v \in e} ~\prod_{u\in e} z_u\right) = \frac{1}{s}\sum_{v \in V(H)\setminus S:z_v= 1}\left(d - \sum_{e\in E(H): v \in e} ~\prod_{u\in e} z_u\right)\geq 0,$$ 
where the last step uses $\sum_{e\in E(H): v \in e} ~\prod_{u\in e} z_u \leq |\{e\in E(H): v \in e\}| =d$ , for all $v \in V(H)$. 
\end{proof}

To show replica symmetry, suppose $(r^s, I_p(r))$ lies on the convex minorant the function $J_p(x):= I_p(x^{\frac{1}{s}})$. Let $\hat{J}_p$ denote the convex minorant of the function $J_p$. Note that $\hat{J}_p$ is increasing on the interval $[p^s,1]$, and $J_p \geq \hat{J}_p$ on $[0,1]$. Hence, for $\bm x \in [0, 1]^{|V(H)|}$ such that $\frac{1}{|E(H)|}t(H, \bm x) \geq r^s$, 
\begin{align}
\frac{1}{|V(H)|}\sum_{v=1}^{|V(H)|} I_p(x_v) = \frac{1}{|V(H)|}\sum_{v=1}^{|V(H)|} J_p\left(x_v^s\right) & \geq \frac{1}{|V(H)|}\sum_{v=1}^{|V(H)|} \hat{J}_p\left(x_v^s\right) \tag*{(using $J_p \geq \hat{J}_p$)} \nonumber \\ 
& \geq  \hat{J}_p\left(\frac{1}{|V(H)|}\sum_{v=1}^{|V(H)|} x_v^s\right) \tag*{(Jensen's inequality)}  \nonumber \\ 
& \geq \hat{J}_p\left(r^s\right) \tag*{(using Lemma \ref{lm:ds})}  \nonumber \\ 
\label{str1} & = I_p(r), 
\end{align} 
The proof of replica symmetry now follows, since there is equality everywhere in \eqref{str1}, if  $x_v = r$, for all $v \in V(H)$. \\

\noindent\textbf{\textit{Proof of Replica Symmetry Breaking}}: 
Here, suppose $(r^s, I_p(r))$ does not lie on the convex minorant of  the function $x \rightarrow I_p(x^{\frac{1}{s}})$. Then there exist $0 \leq r_1 < r < r_2 \leq 1$ such that the point $(r^s, I_p(r))$ lies strictly above the line segment joining $(r_1^s, I_p(r_1))$ and $(r_
2^s, I_p(r_1))$, that is, there exists $0 < \lambda < 1$ such that 
$$\lambda r_1^s + (1-\lambda) r_2^s = r^s \quad \text{and} \quad \lambda I_p(r_1) + (1-\lambda) I_p(r_2) < I_p(r).$$
By continuity, $\lambda' I_p(r_1) + (1-\lambda') I_p(r_2) < I_p(r)$ for all $\lambda'$ in a neighborhood of $\lambda$, which means, there exists $0 < \lambda_0 < 1$ such that 
$$\lambda_0 r_1^s + (1-\lambda_0) r_2^s > r^s \quad \text{and} \quad \lambda_0 I_p(r_1) + (1-\lambda_0) I_p(r_2) < I_p(r).$$
Therefore, choose $M$ large enough so that $M\lambda_0 \geq 1$, and
$$\lambda_0 r_1^s + (1-\lambda_0) r_2^s - \frac{1}{M} > r^s \quad and \quad \lambda_0 I_p(r_1) + (1-\lambda_0) I_p(r_2) + \frac{C_p}{M} < I_p(r),$$
where $C_p:=\log(1/p)+\log(1/q)$ (with $q:=1-p$). (Note, this choice of $M$ depends on $p$ and $r$.)

Now, let $H_n$ be the disjoint union of $M$ complete $s$-uniform hypergraphs on $\ceil{\frac{n}{M}}$ vertices each. This is a dense hypergraph, that is $|E(H_n)|=\Theta(n^s)$, and, hence, satisfies  assumptions \eqref{eq:suniform}, which implies that $\{H_n\}_{n \geq 1}$ is a sequence of mean-field, regular $s$-uniform hypergraphs. Assuming that the vertices of $H_n$ are labelled $\{1, 2, \ldots, M  \ceil{\frac{n}{M}} \}$, define $\bm x = (x_1, x_2, \ldots, x_{|V(H_n)|}) \in [0, 1]^{|V(H_n)|}$ as 
$$x_{j}=
\left\{
\begin{array}{ccc}
r_1 & \text{ for } & j \in \{1, 2, \ldots, K \ceil{\frac{n}{M}} \}, \\ 
r_2 & \text{ for } & j \in \{ K \ceil{\frac{n}{M}} +1, K \ceil{\frac{n}{M}} +2, \ldots,  M  \ceil{\frac{n}{M}}\},
\end{array}
\right.
$$
where $ K=\lfloor M \lambda_0 \rfloor$. Then using $\lambda_0 -\frac{1}{M} \leq \frac{K}{M} \leq \lambda_0$, $r_1 \leq 1$, and $t(H, \bm x)$ as in Definition \ref{defn:approx}, 
\begin{align*}
\frac{t(H_n,\bm x)}{|E(H_n)|}  =\frac{1}{M {\ceil{\frac{n}{M}} \choose s} } \left(K {\ceil{\frac{n}{M}} \choose s}r_1^s+  {\ceil{\frac{n}{M}} \choose s}  (M-K)  r_2^s \right) 
& \geq \lambda_0 r_1^s+ (1- \lambda_0) r_2^s -\frac{r_1^s}{M} \nonumber \\ 
& > r^s. 
\end{align*}
On the other hand, the entropy satisfies, 
\begin{align*}
\frac{1}{|V(H_n)|} \sum_{u=1}^{|V(H_n)|} I_p(x_u)  & = \frac{1}{M  \ceil{\frac{n}{M}}} \left\{ K \left \lceil \frac{n}{M} \right \rceil I_p(r_1) +  \left( M  \left \lceil \frac{n}{M} \right \rceil -  K \left \lceil \frac{n}{M} \right \rceil \right) I_p(r_2) \right\} \nonumber \\ 
& =  \frac{K}{M} I_p(r_1) +  \left( 1  -  \frac{K}{M}  \right) I_p(r_2)   \nonumber \\ 
& \leq \lambda_0 I_p(r_1) + (1-\lambda_0) I_p(r_2) + \frac{I_p(r_2)}{M} \tag*{(using $\lambda_0 -\frac{1}{M} \leq \frac{K}{M} \leq \lambda_0$)} \nonumber \\ 
& \leq \lambda_0 I_p(r_1) + (1-\lambda_0) I_p(r_2)  + \frac{C_p}{M} \tag*{(recall $C_p=\log(1/p)+\log(1/q)$)} \nonumber \\ 
& < I_p(r), 
\end{align*}
which completes the proof of Theorem \ref{repsym}. \hfill $\Box$ \\  

Note that in the construction above the hypergraphs $\{H_n\}_{n \geq 1}$ which break symmetry at $(p, r)$, depend on $p$ and $r$. Whether it is possible to obtain a sequence of regular, mean-field $s$-uniform hypergraphs not depending on $p$ and $r$, which breaks symmetry whenever $(r^s, I_p(r))$ does not lie on the convex minorant of $x \rightarrow I_p(x^\frac{1}{s})$, remains open. In the example below, we construct a sequence of regular, mean-field $s$-uniform hypergraphs (not depending on $p$ and $r$) which breaks symmetry whenever the point $(r^s, I_p(r))$ lies on the concave part of $x\rightarrow I_p(x^\frac{1}{s})$.\footnote{If $s\geq 1$, the function $J_p(x) = I_p(x^{\frac{1}{s}})$ is convex if and only if $p \geq p_0(s) := \frac{s-1}{s-1 + e^{s/(s-1)}}$. On the other hand, if $0 < p < p_0(s)$, then the function $J_p(x)$ has exactly two inflection points (both to the right of $x = p^s$), with a region of concavity in the middle (see \cite[Lemma A.1]{LZ-dense} for details).}

\begin{example}\label{example_concave} Assume that the point $\left(r^s, I_p(r)\right)$ lies on the strictly concave part of the curve of $J_p(x)=I_p(x^{\frac{1}{s}})$. Hence, there exist two points $a, b$ in a neighborhood of $r^s$ such that 
\begin{equation}\label{strictconcave}
\frac{a+b}{2} = r^s \quad \text{and} \quad J_p(r^s) > \frac{1}{2}J_p(a) + \frac{1}{2}J_p(b).
\end{equation}
Let $n=2N$ be even, and let $H_n$ be the disjoint union of two complete $s$-uniform hypergraphs with $N$ vertices each, and vertices labelled labelled $\{1,2,\cdots,N\}$ and $\{N+1,N+2,\cdots,2N\}$, respectively.  Define $\bm x = (x_1, x_2, \ldots, x_{n}) \in [0, 1]^{n}$ as: 
$x_1 = \cdots = x_{N} := a^{\frac{1}{s}}$  and $x_{N+1} = \cdots x_{n} := b^{\frac{1}{s}}$. It is easy to check that $\bm x := (x_1, \cdots, x_n)$ belongs to the constraint space of \eqref{varprob}, and by \eqref{strictconcave}, $\frac{1}{n}\sum_{i=1}^n I_p(x_i) = \frac{1}{2}J_p(a) + \frac{1}{2}J_p(b) < I_p(r)$, which shows replica-symmetry-breaking.
\end{example}

\section{Proof of Theorem \ref{nfree}}\label{nfreeproof}

We begin with a few definitions from graph limit theory. The notion of graph convergence in terms of subgraph densities in Definition \ref{defn:graph_limit} can be metrized using the {\it cut-metric}, which we recall below: 

\begin{defn}\cite{Lov09}\label{defn:Wconvergence}
The {\it cut-distance} between 2 graphons  $W_1, W_2 \in \cW$, is defined as 
\begin{align}\label{eq:Wconvergence}
||W_1-W_2||_\square:=\sup_{f, g: [0, 1] \rightarrow [-1, 1]}\left|\int_{[0, 1]^2} \left(W_1(x, y)-W_2(x, y)\right) f(x) g(y) \mathrm dx \mathrm dy \right|. 
\end{align} 
The {\it cut-metric} between 2 graphons  $W_1, W_2 \in \cW$, is defined as 
$$\delta_{\square}(W_1, W_2):= \inf_{\psi}||W_1^{\psi}-W_2||_\square,$$
with the infimum taken over all measure-preserving bijections $\psi: [0, 1] \rightarrow [0, 1]$, and  $W_1^\psi(x, y):= W_1(\psi(x), \psi(y))$, for $x, y \in [0, 1]$. 
\end{defn}

Hereafter, we assume that $\{G_n\}_{n \geq 1}$ is a sequence of graphs converging to a graphon $W$, which is equivalent to $\delta_\square(W_{G_n}, W) \rightarrow 0$ \cite[Theorem 3.8]{BCLSV08}. Now, for a fixed graph $H$, define the multi-hypergraph $M_{G_n}(H)$ with vertex set $V(G_n)$ and a hyperedge for every copy of $H$ in $G_n$. The assumption $t(H, W)>0$ implies that $|E(M_{G_n}(H))|=\Theta(N(H, G_n))=\Theta(|V(G_n)|^{|V(H)|})$, and 
\begin{align*}
\Delta_2(M_{G_n}(H))=\Theta\left(|V(G_n)|^{|V(H)|-2}\right) \quad \text{and} \quad \Delta_1(M_{G_n}(H))=\Theta\left(|V(G_n)|^{|V(H)|-1}\right), 
\end{align*} 
that is, the sequence $\{M_{G_n}(H)\}_{n \geq 1}$ satisfies assumption \eqref{eq:suniform}. Therefore, $\{M_{G_n}(H)\}_{n \geq 1}$ is mean-field, and by Definition \ref{defn:approx}, for every $0 < p < r < 1$:
$$\lim_{n\rightarrow \infty} \frac{\frac{1}{|V(G_n)|}\log \mathbb{P}\left(T_p(H,G_n)\geq r^{|V(H)|} N(H, G_n)\right) }{-\psi_{G_n}(r, p, H)} = 1.$$ 
where 
\begin{equation}\label{graphvarprob}
\psi_{G_n}(r, p, H) = \frac{1}{|V(G_n)|}\inf_{\bm x \in [0,1]^{|V(G_n)|}} \left\{\sum_{v=1}^{|V(G_n)|} I_p(x_v) :~N(H,G_n,\bm x) \geq r^{|V(H)|} N(H,G_n)\right\}.
\end{equation}
with 
\begin{itemize}
\item[--] $\bm x=(x_1, x_2, \ldots, x_{|V(G_n)|}) \in [0,1]^{|V(G_n)|}$ and $\bm x_{\bm u} := \prod_{j=1}^{|V(H)|} x_{u_j}$, and 

\item[--] $N(H,G_n,\bm x) = \frac{1}{|Aut(H)|}\sum_{\bm u \in V(G_n)_{|V(H)|}} \bm x_{\bm u}\prod_{(a,b) \in E(H)}a_{u_a, u_b}(G_n).$ 
\end{itemize}

The argument above shows that to prove Theorem \ref{nfree}, it suffices to prove that the limit of \eqref{graphvarprob} equals \eqref{eq:W}. To this end, suppose $\{G_n\}_{n \geq 1}$ is a sequence of graphs such that $\lim_{n \rightarrow \infty}\delta_\square(W_{G_n}, W) = 0$. Then it follows from \cite[Lemma 5.3]{BCLSV08} that there exists a permutation $\sigma: [|V(G_n)|] \rightarrow  [|V(G_n)|]$ such that $||W_{G_n^\sigma}-W||_\square \rightarrow 0$, where $G_n^\sigma$ is a graph obtained by relabelling the vertices of $G_n$ by $\sigma$. Moreover, the variational problem \eqref{graphvarprob}  is invariant under vertex permutations, that is, $\psi_{G_n}(r, p, H)=\psi_{G_n^\sigma}(r, p, H)$. This implies, to derive the limit of \eqref{graphvarprob}, we can, without loss of generality, assume $||W_{G_n}-W||_\square \rightarrow 0$. 

We begin with the following lemma, which follows by a telescoping argument identical to the proof of \cite[Theorem 3.7]{BCLSV08}. We omit the details. 
	
\begin{lem}\label{borgsthm} Let $G_n$ be a sequence of graphs such that $||W_{G_n}-W||_{\square} \rightarrow 0$. Then, for any fixed graph $H=(V(H), E(H))$, 
$$\lim_{n\rightarrow \infty} \sup\limits_{f_1,\cdots,f_{|V(H)|}} \Bigg|\int\limits_{[0,1]^{|V(H)|}} \left(\prod_{(i,j) \in E(H)} W_{G_n}(x_i,x_j) - \prod_{(i,j) \in E(H)} W(x_i,x_j)\right)\prod_{i=1}^{|V(H)|} f_i(x_i)~\mathrm  d x_i \Bigg| = 0, $$ 
where the supremum runs over all measurable functions $f_1,\cdots,f_{|V(H)|} : [0,1] \mapsto [0,1]$. 
\end{lem}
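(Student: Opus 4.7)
The plan is to prove the lemma by the classical telescoping argument used in the counting lemma for graphons, adapting it to accommodate the vertex-weight functions $f_1,\ldots,f_{|V(H)|}$. Enumerate the edges of $H$ as $e_1=(i_1,j_1),\ldots,e_m=(i_m,j_m)$ where $m=|E(H)|$, and use the identity
\begin{align*}
\prod_{l=1}^m W_{G_n}(x_{i_l},x_{j_l}) - \prod_{l=1}^m W(x_{i_l},x_{j_l}) \;=\; \sum_{k=1}^m \Delta_{n,k}(x),
\end{align*}
where $\Delta_{n,k}(x) = \bigl(\prod_{l<k} W_{G_n}(x_{i_l},x_{j_l})\bigr)(W_{G_n}-W)(x_{i_k},x_{j_k})\bigl(\prod_{l>k} W(x_{i_l},x_{j_l})\bigr)$. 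This reduces the claim to showing that each integral $\int \Delta_{n,k}(x)\prod_i f_i(x_i)\,dx$ tends to zero uniformly in $f_1,\ldots,f_{|V(H)|}:[0,1]\to[0,1]$.

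For fixed $k$, I would integrate out all coordinates $x_i$ with $i\notin\{i_k,j_k\}$, writing the integral in the form
\begin{align*}
\int_{[0,1]^2}(W_{G_n}(y,z)-W(y,z))\,G_{n,k}(y,z;f)\,dy\,dz,
\end{align*}
where $G_{n,k}(y,z;f)\in[0,1]$ is the accumulated contribution of the remaining graphon factors (some copies of $W_{G_n}$, some of $W$) together with the remaining weights $f_i(x_i)$ for $i\neq i_k,j_k$ and the endpoint weights $f_{i_k}(y)f_{j_k}(z)$. The goal is to bound this by a constant (depending only on $H$) times the cut norm $\|W_{G_n}-W\|_\square$, uniformly in $f$.

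The main obstacle is that the cut norm directly controls integrals only against product-form test functions $\phi(y)\psi(z)$, whereas $G_{n,k}(y,z;f)$ need not factor when $H\setminus\{e_k\}$ contains a path between $i_k$ and $j_k$. To handle this, I would use the layer-cake decomposition $f_i(x)=\int_0^1 \mathbf 1\{f_i(x)\geq t\}\,dt$ to reduce to indicator weights $f_i=\mathbf 1_{S_i}$, at which point the integral becomes a ``colored'' homomorphism integral of $H$ into $W_{G_n}$ and $W$ restricted to the product set $\prod_i S_i$. The classical graphon counting lemma can then be invoked edge-by-edge: at each step only one factor of $(W_{G_n}-W)$ appears and, after integrating out variables in a carefully chosen order, can be paired with a product-form weight, yielding a cut-norm bound. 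An alternative route, bypassing the layer-cake reduction, is an induction on $|E(H)|$ that peels off one edge at a time, treating the $f_i$'s as vertex weights on a reduced (multi-)hypergraph so that each inductive step incurs only a single cut-norm penalty against a product-form test function.

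Summing the $m$ telescoped contributions yields a bound of the form $m\cdot C_H\,\|W_{G_n}-W\|_\square$, which vanishes by hypothesis. The delicate point, and the expected technical crux, is precisely ensuring uniformity of the per-edge cut-norm estimate over all measurable $f_i:[0,1]\to[0,1]$; once that is in hand, the lemma follows.
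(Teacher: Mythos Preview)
Your telescoping decomposition is exactly the argument the paper has in mind (it cites \cite[Theorem~3.7]{BCLSV08} and omits the details). But the ``main obstacle'' you flag is not an obstacle at all, and your workarounds are unnecessary. Do not integrate out the variables $x_i$ with $i\notin\{i_k,j_k\}$; instead \emph{freeze} them. For each fixed $(x_i)_{i\neq i_k,j_k}$, the remaining integrand as a function of $(y,z):=(x_{i_k},x_{j_k})$ factors as $C\cdot\phi(y)\,\psi(z)$ with $C,\phi,\psi$ all $[0,1]$-valued, simply because $H$ is simple and no other edge of $H$ joins $i_k$ to $j_k$. Hence the inner double integral is at most $\|W_{G_n}-W\|_\square$ in absolute value for every value of the frozen variables, and integrating over those variables and summing over $k$ gives the uniform bound $|E(H)|\,\|W_{G_n}-W\|_\square$.

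Note also that your layer-cake reduction to indicator $f_i$'s does not, by itself, produce a product test function: the non-factoring you worry about comes from the graphon factors $W_{G_n}$ and $W$ linking $i_k$ and $j_k$ through the rest of $H$, not from the $f_i$, so replacing the $f_i$ by indicators changes nothing in that respect. That branch of your plan therefore does not close the argument as written; the conditioning trick above is both simpler and the standard way this step is carried out.
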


Given a function $h: [0,1] \mapsto [0,1]$ and a graphon $W \in \cW$, recall the definition of $t(H, W, h)$ from \eqref{eq:tHWh}. Also, denote by $\cT_n $ the class of all functions $h : [0,1] \mapsto [0,1]$, which are constant on the intervals $\left(\frac{a-1}{|V(G_n)|},\frac{a}{|V(G_n)|}\right]$, for every $1 \leq a \leq |V(G_n)|$. Then, recalling \eqref{graphvarprob}, 
\begin{align}\label{step1}
&\psi_{G_n} (H, r, p) = \nonumber \\ 
& \inf\limits_{h \in \cT_n} \Bigg\{\int_0^1 I_p(h(x)) \mathrm d x : t(H,W_{G_n},h) - R\left(H,G_n,h\right)\geq r^{|V(H)|} \left(t(H,W_{G_n})-R\left(H,G_n\right)\right)\Bigg\}, 
\end{align}
where 
$$R\left(H,G_n, h\right) = \sum_{\bm u \in [|V(G_n)|]^{|V(H)|}\big\setminus [|V(G_n)|]_{|V(H)|}} \prod_{(a,b)\in E(H)} a_{u_au_b}(G_n)\prod_{i=1}^{|V(H)|}\int_{\frac{u_i-1}{|V(G_n)|}}^{\frac{u_i}{|V(G_n)|}} h(y)\mathrm d y,$$
and $R(H,G_n)=R(H,G_n, 1)$, which is $R(H,G_n, h)$, when $h$ is the constant function $1$. The adjustment by $R(H,G_n, h)$ is required to move from the sum over all indices $[|V(G_n)|]^{|V(H)|}$ in $t(H, W_{G_n}, h)$ to the sum over distinct indices $[|V(G_n)|]_{|V(H)|}$ in $N(H, G_n, \bm x)$ (recall \eqref{graphvarprob}). (Note that $R\left(H,G_n, h\right) = 0$ if $H$ is a clique.) However, the asymptotic contribution of this adjustment is small: 
\begin{align}\label{remgoestozero}
\sup_{h: [0,1]\mapsto [0,1]} R(H,G_n,h) \leq \frac{1}{|V(G_n)|^{|V(H)|}} \left|[V(G_n)]^{|V(H)|}\big\setminus [V(G_n)]_{|V(H)|}\right| \leq  \frac{\binom{|V(H)|}{2}}{|V(G_n)|}. 
\end{align}

\begin{lem} If $G_n$ is a sequence of graphs  such that $||W_{G_n}-W||_\square \rightarrow 0$, then for every fixed graph $H$ and $0 < p < r < 1$, 
\begin{equation}\label{step2}
\lim_{n\rightarrow \infty} \left\{\psi_{G_n} (H, r, p)  - A_n(H, r, p)\right\} = 0, 
\end{equation} 
where $A_n(H, r, p) :=  \inf_{h \in \cT_n} \left\{\int_0^1 I_p(h(x)) \mathrm d x : t(H,W,h) \geq r^{|V(H)|} t(H,W)\right\}$.
\end{lem}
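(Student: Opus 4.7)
My plan is to sandwich $\psi_{G_n}(H,r,p)$ between two nearby variants of $A_n(H,r,p)$ using uniform estimates on the defining functionals, and then absorb the gap via a continuity (perturbation) argument that rests essentially on the hypothesis $t(H,W)>0$ and $r<1$.

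\emph{Sandwich.} Applying Lemma \ref{borgsthm} with $f_1 = \cdots = f_{|V(H)|} = h$ and using the uniform bound \eqref{remgoestozero} on $R(H,G_n,h)$ gives a sequence $\eta_n \to 0$ with
\[
\sup_{h \in \cT_n} \bigl| t(H,W_{G_n},h) - R(H,G_n,h) - t(H,W,h) \bigr| \leq \eta_n \text{ and } \bigl| t(H,W_{G_n}) - R(H,G_n) - t(H,W) \bigr| \leq \eta_n.
\]
Hence (up to harmless absolute constants) any $h \in \cT_n$ feasible for $\psi_{G_n}$ satisfies $t(H,W,h) \geq r^{|V(H)|} t(H,W) - 2\eta_n$, while any $h$ with $t(H,W,h) \geq r^{|V(H)|} t(H,W) + 2\eta_n$ is feasible for $\psi_{G_n}$. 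Writing $A_n^{\pm}$ for the variant of $A_n$ whose constraint level is shifted by $\pm 2\eta_n$, I get $A_n^{+} \geq \psi_{G_n}(H,r,p) \geq A_n^{-}$.

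\emph{Continuity.} I would then show $A_n^{\pm} - A_n = o(1)$ by perturbing any near-optimal $h$ to $\tilde h_\alpha := (1-\alpha) h + \alpha \in \cT_n$ and setting $F(\alpha) := t(H,W,\tilde h_\alpha)$. Expanding \eqref{eq:tHWh}, $F$ is a polynomial of degree $|V(H)|$ in $\alpha$ with coefficients bounded by $t(H,W) \leq 1$, and since $\tilde h_\alpha$ is pointwise non-decreasing in $\alpha$ and $W \geq 0$, $F$ is non-decreasing on $[0,1]$, with $F(0) = t(H,W,h)$ and $F(1) = t(H,W)$. Either $h$ is already feasible for $A_n^+$ (resp.~$A_n$), or
\[
F(1) - F(0) \;\geq\; (1 - r^{|V(H)|}) t(H,W) - 2\eta_n \;\geq\; \rho \;>\; 0
\]
uniformly in $n$ large, using $r<1$ and $t(H,W)>0$. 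A compactness argument for the family of degree-$|V(H)|$ non-decreasing polynomials with coefficients bounded by $1$ and $F(1) - F(0) \geq \rho$ then yields an inverse modulus $\omega(\delta) \to 0$ as $\delta \to 0$, \emph{independent of $h$}, together with an $\alpha_n \leq \omega(2\eta_n) \to 0$ satisfying $F(\alpha_n) \geq F(0) + 2\eta_n$, so $\tilde h_{\alpha_n}$ is feasible for $A_n^+$ (resp.~$A_n$). The entropy increment is bounded by
\[
\int_0^1 \bigl| I_p(\tilde h_{\alpha_n}(x)) - I_p(h(x)) \bigr|\,\mathrm d x \;\leq\; \sup_{y \in [0,1]} \bigl| I_p((1-\alpha_n) y + \alpha_n) - I_p(y) \bigr| \;=\; o(1),
\]
by uniform continuity of $I_p$ on $[0,1]$. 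Combining these, $A_n^+ \leq A_n + o(1)$ and $A_n \leq A_n^- + o(1)$; together with the trivial $A_n^- \leq A_n \leq A_n^+$ and the sandwich, this gives $\psi_{G_n}(H,r,p) = A_n(H,r,p) + o(1)$.

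\emph{Main obstacle.} The delicate step is securing the uniformity of the inverse modulus $\omega$ across $n$ and across the near-optimizers one perturbs. This rests on (i) the uniform coefficient bound placing the $F$'s in a compact polynomial family, and (ii) the uniform positivity $F(1) - F(0) \geq \rho > 0$, which crucially uses \emph{both} $r < 1$ and the standing hypothesis $t(H,W) > 0$ of Theorem \ref{nfree}; without it the gap could vanish, the inverse modulus would fail to be uniform, and the perturbation step would collapse.
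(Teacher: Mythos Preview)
Your argument is correct and follows a genuinely different route from the paper's. The paper does not perturb the near-optimizer $h$; instead it perturbs the \emph{level} $r$. Concretely, given a near-optimizer $h_n$ for $A_n(H,r,p)$, the paper shows, via Lemma~\ref{borgsthm} and \eqref{remgoestozero}, that $h_n$ is feasible for $\psi_{G_n}(H,r_\varepsilon,p)$ at a slightly smaller level $r_\varepsilon$, and then invokes an external continuity result (\cite[Lemma 5.8]{CD16}) to pass from $\psi_{G_n}(H,r_\varepsilon,p)$ back to $\psi_{G_n}(H,r,p)$ with a $o_\varepsilon(1)$ error that is uniform in $n$; the other inequality is handled symmetrically. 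Your approach instead fixes the level and moves the function, via the convex interpolation $\tilde h_\alpha=(1-\alpha)h+\alpha$, and replaces the appeal to \cite{CD16} by a self-contained compactness argument on the one-parameter polynomial $\alpha\mapsto t(H,W,\tilde h_\alpha)$. Your observation that this polynomial has nonnegative coefficients summing to $t(H,W)$ is exactly what makes the compactness argument clean, and your identification of the crucial inputs $t(H,W)>0$ and $r<1$ (to secure the uniform gap $F(1)-F(0)\geq\rho>0$) is the right diagnosis of where the standing hypotheses of Theorem~\ref{nfree} enter. The trade-off: the paper's route is shorter on the page by outsourcing the continuity step, while yours is fully self-contained and makes the mechanism (uniform inverse modulus for a compact polynomial family) explicit.
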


\begin{proof} 
Fix $r \in (p,1)$. For each $n \geq 1$, choose $h_n \in \cT_n$ such that 
\begin{align}\label{eq:approx_I}
t(H,W,h_n) \geq r^{|V(H)|} t(H,W) \quad \textrm{and}\quad \int_{0}^1 I_p(h_n(x)) \mathrm d x < A_n(H,r,p) + \frac{1}{n}.
\end{align} 
By Lemma \ref{borgsthm}, $t(H,W_{G_n},h_n) - t(H,W,h_n) \rightarrow 0$ and $t(H,W_{G_n}) \rightarrow t(H,W)$, as $n \rightarrow \infty$. 
Then there exists $\varepsilon \in (0, r/p)$ such that, by \eqref{remgoestozero}, 
$$t(H,W_{G_n},h_n) - R(H,G_n,h_n) > t(H,W,h_n) - \varepsilon \quad \text{and} \quad \frac{ t(H,W) }{2}< t(H,W_{G_n}) < \frac{r^{|V(H)|} t(H,W)}{(r-\varepsilon p)^{|V(H)|}},$$
for all large $n$. Now, define $\varepsilon':=\frac{2\varepsilon}{p^{|V(H)|} t(H,W)}$. Then 
\begin{align*}
t(H,W_{G_n},h_n) - R(H,G_n,h_n) &> r^{|V(H)|} t(H,W) - \varepsilon \tag*{(by \eqref{eq:approx_I})} \\ 
&> (r-\varepsilon p)^{|V(H)|} t(H,W_{G_n}) - \varepsilon  \\
&>  (r-\varepsilon p)^{|V(H)|}t(H,W_{G_n}) -  \varepsilon' p^{|V(H)|} t(H,W_{G_n}) \\
&\geq \left(\left (r/p-\varepsilon \right)^{|V(H)|} - \varepsilon' \right)p^{|V(H)|} \left(t(H,W_{G_n})-R(H,G_n)\right),
\end{align*}
by choosing $\varepsilon$ small enough so that  $\left (r/p-\varepsilon \right)^{|V(H)|} - \varepsilon'>0$. This implies, recalling \eqref{step1} and \eqref{eq:approx_I}, 
\begin{equation}\label{firstless}
\psi_{G_n}\left(H,r_\varepsilon,p\right) \leq \int_0^1 I_p(h_n(x)) \mathrm d x < A_n(H,r,p) + \frac{1}{n}.
\end{equation}
where $r_\varepsilon=\left(\left(r/p-\varepsilon \right)^{|V(H)|} - \varepsilon' \right)^{\frac{1}{|V(H)|}} p$. Note that $r_\varepsilon \rightarrow r$, as $\varepsilon \rightarrow 0$, and by arguments similar to the proof of \cite[Lemma 5.8]{CD16}, it follows that 
$$\psi_{G_n}\left(H,r_\varepsilon, p\right) \geq \psi_{G_n} (H,r,p) + o(1),$$ 
where the $o(1)$-term depends on $p, r, H$, and $\varepsilon$, but not on $n$, and goes to zero as $\varepsilon \rightarrow 0$. This implies $\limsup_{n \rightarrow \infty} \left\{\psi_{G_n} (H,r,p) - A_n(H,r,p)\right\} \leq 0$. The other direction can be proved similarly. 
\end{proof}

Next, let $\mathcal{Q}$ denote the class of all measurable functions $h: [0,1] \mapsto [0,1]$, such that $h$ is continuous at every irrational point in $[0,1]$. Then define 
$$L_W(H, r, p) := \inf\limits_{h \in \mathcal{Q}}  \Bigg\{\int_0^1 I_p(h(x)) \mathrm d x : t(H,W,h) \geq r^{|V(H)|} t(H,W)\Bigg\}.$$
Note that $\cT_n \subset \mathcal{Q}$, for each $n \geq 1$, which gives $\liminf_{n\rightarrow\infty} A_n(H, r, p) \geq L_W(H, r, p).$ Now, fixing $\varepsilon > 0$, choose $h \in \mathcal Q$ such that 
\begin{align}\label{eq:I}
t(H,W,h) \geq r^{|V(H)|} t(H,W) \quad \text{and} \quad \int_0^1 I_p(h(x)) \mathrm dx < L_W(H, r, p) + \varepsilon.
\end{align}
For $n \geq 1$, define $$h_n(x) = \sum_{a=1}^{|V(G_n)|} \sup\limits_{y \in \left(\frac{a-1}{|V(G_n)|},\frac{a}{|V(G_n)|}\right]} h(y)~\bm{1}\left\{x \in \left(\frac{a-1}{|V(G_n)|},\frac{a}{|V(G_n)|} \right]\right\}.$$ 
Clearly, $h_n \geq h$, and hence, $t(H,W, h_n) \geq r^{|V(H)|} t(H,W)$, for all $n \geq1$. Moreover, since $h_n \in \cT_n$, 
\begin{align}\label{eq:II}
A_n(H, r, p) \leq \int_0^1 I_p(h_n(x)) \mathrm d x, \quad \text{ for all } n \geq 1.
\end{align}
Now,  since $h_n(x) \rightarrow h(x)$ for every irrational $x \in [0,1]$, and $I_p$ is a bounded, continuous function on $[0,1]$, $\lim_{n\rightarrow \infty}\int_0^1 I_p(h_n(x)) \mathrm d x = \int_0^1 I_p(h(x)) \mathrm dx,$ by the dominated convergence theorem. Hence, by taking limits in \eqref{eq:II} and using \eqref{eq:I}, we have $\limsup_{n\rightarrow\infty} A_n(H, r, p) < L_W(H, r, p) + \varepsilon$. Finally, since $\varepsilon>0$ is arbitrary, by \eqref{step2}, we get 
\begin{equation}\label{fin}
\lim_{n\rightarrow\infty} \psi_{G_n}(H, r, p) =  L_W(H, r, p).
\end{equation}

Therefore, to complete the proof of Theorem  \ref{nfree}, it suffices to show $L_W(H, r, p)=\psi_2(H, W, r, p)$ (recall \eqref{eq:W}). Clearly, $\psi_2(H, W, r, p) \leq L_W(H, r, p)$. For the other direction, let $\varepsilon \in (0, r/p)$ and take a measurable function $h : [0,1] \mapsto [0,1]$ such that 
\begin{align}\label{eq:III}
t(H,W,h) \geq r^{|V(H)|} t(H,W) \quad \text{and} \quad \int_0^1 I_p(h(x)) \mathrm dx  < \psi_2(H,W, r, p) + \varepsilon
\end{align}
By standard measure theoretic arguments,  there exists a continuous function $g: [0,1] \mapsto [0,1]$ such that: $\int_0^1 I_p(g(x)) \mathrm dx < \int_0^1 I_p(h(x)) \mathrm d x  + \varepsilon$ and 
$$t(H,W, g) \geq  t(H,W,h) - \varepsilon^{|V(H)|} p^{|V(H)|} t(H,W) \geq (r^{|V(H)|}-(\varepsilon p)^{|V(H)|} ) t(H,W),$$
where the last step uses \eqref{eq:III}. Hence, defining $r_\varepsilon=(r^{|V(H)|}-(\varepsilon p)^{|V(H)|})^{\frac{1}{|V(H)|}}$, gives 
\begin{align}\label{eq:IV}
L_W(H, r _\varepsilon, p) \leq \int_0^1 I_p(g(x)) \mathrm dx < \int_0^1 I_p(h(x)) \mathrm d x  + \varepsilon < \psi_2(H, W, r, p) + 2\varepsilon
\end{align} 
where the last step uses \eqref{eq:III}. Finally, since $r_\varepsilon \rightarrow r$, as $\varepsilon \rightarrow 0$, by arguments similar to the proof of \cite[Lemma 5.8]{CD16}, $$L_W(H, r_\varepsilon, p) \geq L_W(H, r, p) + o(1),$$  
where the $o(1)$-term goes to zero as $\varepsilon \rightarrow 0$. This combined with \eqref{eq:IV} shows that $L_W(H, r, p) = \psi_2(H, W, r, p)$, which completes the proof of Theorem \ref{nfree}. \hfill $\Box$ \\

\begin{remark}\label{rmk:2sided} By arguments similar to the proof of Theorem \ref{nfree}, an analogous variational problem can be established for the two-sided tail probability.  More precisely, if $\{G_n\}_{n \geq 1}$ is a sequence of graphs converging in cut-metric to a graphon $W \in \cW$, and $H=(V(H), E(H))$ is a fixed graph satisfying $t(H,W) > 0$, then for any fixed $\varepsilon >0$, it follows that 
\begin{equation*}
\lim \limits_{n\rightarrow \infty} \frac{1}{|V(G_n)|}\log \P\left(\left|\frac{1}{p^{|V(H)|}}T_p(H, G_n) - N(H, G_n)\right| \geq \varepsilon\right) = -\gamma_2(H, W, \varepsilon, p),
\end{equation*}
where $$\gamma_2(H, W, \varepsilon, p):=\inf_{h : [0,1] \mapsto [0,1]} \left\{\int_0^1 I_p(h(x)) \mathrm d x : \left|\frac{t(H,W,h)}{p^{|V(H)|}}- t(H,W)\right| \geq \varepsilon \right\},$$  is the {\it exact Bahadur slope} \cite{bahadur} of $\frac{1}{p^{|V(H)|}}T_p(H, G_n)$, the estimate of $N(H, G_n)$ in the subgraph sampling model described in Remark \ref{rmk:estimation}. 
\end{remark}

The variational problem in \eqref{eq:W} reduces to a finite dimensional optimization problem,  if the limiting graphon $W$ is a block function (constant on finitely many blocks). These functions  are dense in the space of graphons and arise naturally as limits of stochastic block models.  

\begin{remark}\label{rem:block_variational} (Block Graphons) Suppose $\{G_n\}_{n \geq 1}$ is a sequence of graphs converging in cut-metric to the following $K$-block graphon: 
$$W(x, y) := \sum_{i=1}^K \sum_{j=1}^K b_{ij} \bm{1}\{ x \in A_i  \text{ and }  y \in A_{j}\},$$ 
where $B:=((b_{ij}))_{1 \leq i, j  \leq K}$ is a non-zero $K \times K$ symmetric matrix with entries in $[0, 1]$, and $A_1, A_2, \ldots, A_K$ form a measurable partition of $[0,1]$, with $\lambda(A_i)>0$, for all $1 \leq i \leq K$ (here $\lambda(\cdot)$ denotes the Lebesgue measure on $[0, 1]$). Then, for any fixed graph $H=(V(H), E(H))$, the homomorphism density 
$$t(H,W) = \sum_{(u_1, \ldots, u_{|V(H)|}) \in [K]^{|V(H)|}}\prod_{(a,b)\in E(H)} b_{u_a u_b}\prod_{i=1}^{|V(H)|}\lambda(A_{u_i}).$$
Further, for $\bm x=(x_1, x_2, \ldots, x_K) \in [0,1]^K$, define
$$t(H,W,\bm x) := \sum_{(u_1, \ldots, u_{|V(H)|}) \in [K]^{|V(H)|}}\prod_{(a,b)\in E(H)}b_{u_au_b}\prod_{i=1}^{|V(H)|}\lambda(A_{u_i})x_{u_i}.$$
Then the RHS of \eqref{eq:W} becomes, 
\begin{equation}\label{eq:blockfinvar}
\inf_{\bm x = (x_1, x_2, \ldots, x_K) \in [0,1]^K} \left\{\sum_{i=1}^K \lambda(A_i)I_p(x_i)~:~t(H,W,\bm x) \geq r^{|V(H)|} t(H,W)\right\},
\end{equation}
which is a finite dimensional optimization problem with $K$ variables. To see \eqref{eq:blockfinvar},  note that for any $h: [0, 1] \rightarrow [0, 1]$ in the constraint space of \eqref{eq:W}, the vector $\bm x \in [0,1]^K$ defined by $$x_i := \frac{1}{\lambda(A_i)} \int_{A_i} h(z) \mathrm dz, \quad \text{ for } 1\leq i \leq K,$$
belongs to the constraint space of \eqref{eq:blockfinvar}. Then by the convexity of $I_p(\cdot)$, $$\sum_{i=1}^K \lambda(A_i)I_p(x_i) \leq \int_0^1 I_p(h(z)) \mathrm d z,$$ which shows that  \eqref{eq:blockfinvar} is at most the RHS of \eqref{eq:W}. Conversely, for some $\bm x$ in the constraint space of \eqref{eq:blockfinvar}, the function $h(z):= \sum_{i=1}^K x_i \bm{1}\{z \in A_i\}$ belongs to the constraint space of \eqref{eq:W} and 
$\int_0^1 I_p(h(z)) \mathrm dz = \sum_{i=1}^K \lambda(A_i)I_p(x_i)$. Hence, the RHS of \eqref{eq:W} is equal to \eqref{eq:blockfinvar}.  
\end{remark}

We conclude with an example of a sequence of non-regular graphs which exhibits replica symmetry breaking, for all $0 < p < r < 1$.

\begin{example}\label{bipartite} (Bipartite Graphs) Consider the sequence of complete bipartite graphs $K_{m,n}$, with $m/(m+n) \rightarrow \alpha \in (0,1)$. In this case, the limiting graphon $W$ is a two-block function with block sizes $\alpha$ and $(1-\alpha)$, with $0$ on the diagonal blocks and 1 on the off-diagonal blocks. Then by Theorem \ref{nfree}, for  $0 < p < r < 1$, 
$$\frac{1}{m+n}\log \P(T_p(K_2, K_{m, n}) \geq r^2 mn ) \rightarrow -\psi_2(K_2, W, r, p).$$
In this case, $W$ is 2-block, hence, by Remark \ref{rem:block_variational}, 
\begin{equation}\label{compbip}
\psi_2(K_2, W, r, p)=\inf_{(x, y) \in [0,1]^2} \left\{\alpha I_p(x) + (1-\alpha)I_p(y)~: xy  \geq r^2 \right\}.
\end{equation}
\begin{itemize}	

\item $\alpha =\frac{1}{2}$: Then any $(x, y)$ in the constraint space of \eqref{compbip} satisfies $\frac{x+y}{2}\geq \sqrt{xy} \geq r$, and by the convexity of $I_p$ and the fact that it is increasing on the interval $(p,1)$, it follows that $\frac{1}{2}I_p(x) + \frac{1}{2}I_p(y) \geq I_p\left(r\right)$, showing replica symmetry, for all values of $0 < p < r < 1$. 
	
\item $\alpha \neq \frac{1}{2}$: In this case,  the graph sequence $K_{m, n}$ is irregular. Note that \eqref{compbip} equals the minimum of the function:
$$g(x) := \alpha I_p(x) + (1-\alpha)I_p\left(\frac{r^2}{x}\right), \quad \text{over} \quad x \in \left[r^2,1\right].$$ 
Note that $g$ is differentiable on $\left(r^2, 1\right)$ and $g'\left(r\right) \neq 0$, showing replica-symmetry-breaking  for all values of $0 < p < r < 1$.
\end{itemize} 
\end{example}

\bibliographystyle{abbrv}
\bibliography{ldp_ref}

\begin{thebibliography}{10}

\bibitem{augeri1}
F.~Augeri.
\newblock Nonlinear large deviation bounds with applications to traces of
  {W}igner matrices and cycles counts in {E}rd{\H{o}}s-{R}\'{e}nyi graphs.
\newblock {\em arXiv:1810.01558}, 2018.

\bibitem{austin}
T.~Austin.
\newblock The structure of low-complexity {G}ibbs measures on product spaces.
\newblock {\em arXiv preprint arXiv:1810.07278}, 2018.

\bibitem{bahadur}
R.~R. Bahadur.
\newblock Rates of convergence of estimates and test statistics.
\newblock {\em The Annals of Mathematical Statistics}, 38(2):303--324, 1967.

\bibitem{BM17}
A.~Basak and S.~Mukherjee.
\newblock Universality of the mean-field for the {P}otts model.
\newblock {\em Probability Theory and Related Fields}, 168(3-4):557--600, 2017.

\bibitem{BBS11}
S.~Bhamidi, G.~Bresler, and A.~Sly.
\newblock Mixing time of exponential random graphs.
\newblock {\em Ann. Appl. Probab.}, 21(6):2146--2170, 2011.

\bibitem{bhamidi2018weighted}
S.~Bhamidi, S.~Chakraborty, S.~Cranmer, and B.~Desmarais.
\newblock Weighted exponential random graph models: Scope and large network
  limits.
\newblock {\em Journal of Statistical Physics}, 173(3-4):704--735, 2018.

\bibitem{BGLZ}
B.~B. Bhattacharya, S.~Ganguly, E.~Lubetzky, and Y.~Zhao.
\newblock Upper tails and independence polynomials in random graphs.
\newblock {\em Advances in Mathematics}, 319(313--347), 2017.

\bibitem{BGSZ}
B.~B. Bhattacharya, S.~Ganguly, X.~Shao, and Y.~Zhao.
\newblock Upper tails for arithmetic progressions in a random set.
\newblock {\em International Mathematics Research Notices}, to appear, 2018.

\bibitem{BCLSV08}
C.~Borgs, J.~T. Chayes, L.~Lov{\'a}sz, V.~T. S{\'o}s, and K.~Vesztergombi.
\newblock Convergent sequences of dense graphs. {I}. {S}ubgraph frequencies,
  metric properties and testing.
\newblock {\em Adv. Math.}, 219(6):1801--1851, 2008.

\bibitem{BCLSV12}
C.~Borgs, J.~T. Chayes, L.~Lov{\'a}sz, V.~T. S{\'o}s, and K.~Vesztergombi.
\newblock Convergent sequences of dense graphs {II}. {M}ultiway cuts and
  statistical physics.
\newblock {\em Ann. of Math. (2)}, 176(1):151--219, 2012.

\bibitem{Briet}
J.~Bri{\"e}t and S.~Gopi.
\newblock Gaussian width bounds with applications to arithmetic progressions in
  random settings.
\newblock {\em International Mathematics Research Notices}, to appear, 2018.

\bibitem{CD16}
S.~Chatterjee and A.~Dembo.
\newblock Nonlinear large deviations.
\newblock {\em Adv. Math.}, 299:396--450, 2016.

\bibitem{CD13}
S.~Chatterjee and P.~Diaconis.
\newblock Estimating and understanding exponential random graph models.
\newblock {\em The Annals of Statistics}, 41(5):2428--2461, 2013.

\bibitem{CV11}
S.~Chatterjee and S.~R.~S. Varadhan.
\newblock The large deviation principle for the {E}rd{\H o}s-{R}{\'e}nyi random
  graph.
\newblock {\em European J. Combin.}, 32(7):1000--1017, 2011.

\bibitem{CD18}
N.~Cook and A.~Dembo.
\newblock Large deviations of subgraph counts for sparse
  {E}rd{\H{o}}s-{R}\'enyi graphs.
\newblock {\em arXiv:1809.11148}, 2018.

\bibitem{dembo2018large}
A.~Dembo and E.~Lubetzky.
\newblock A large deviation principle for the {E}rd{\H{o}}s--{R}{\'e}nyi
  uniform random graph.
\newblock {\em Electronic Communications in Probability}, 23, 2018.

\bibitem{eldan}
R.~Eldan.
\newblock Gaussian-width gradient complexity, reverse log-{S}obolev
  inequalities and nonlinear large deviations.
\newblock {\em Geom. Funct. Anal.}, to appear, 2018.

\bibitem{JR11}
S.~Janson and A.~Ruci{\'n}ski.
\newblock Upper tails for counting objects in randomly induced subhypergraphs
  and rooted random graphs.
\newblock {\em Arkiv f{\"o}r matematik}, 49(1):79--96, 2011.

\bibitem{kenyon2017multipodal}
R.~Kenyon, C.~Radin, K.~Ren, and L.~Sadun.
\newblock Multipodal structure and phase transitions in large constrained
  graphs.
\newblock {\em Journal of Statistical Physics}, 168(2):233--258, 2017.

\bibitem{kenyon2017phase}
R.~Kenyon, C.~Radin, K.~Ren, and L.~Sadun.
\newblock The phases of large networks with edge and triangle constraints.
\newblock {\em Journal of Physics A: Mathematical and Theoretical},
  50(43):435001, 2017.

\bibitem{KW}
J.~M. Klusowski and Y.~Wu.
\newblock Counting motifs with graph sampling.
\newblock In {\em Proceedings of the 31st Conference On Learning Theory},
  volume~75, pages 1966--2011, 2018.

\bibitem{Lov09}
L.~Lov{\'a}sz.
\newblock Very large graphs.
\newblock In {\em Current developments in mathematics, 2008}, pages 67--128.
  Int. Press, Somerville, MA, 2009.

\bibitem{LZ-dense}
E.~Lubetzky and Y.~Zhao.
\newblock On replica symmetry of large deviations in random graphs.
\newblock {\em Random Structures Algorithms}, 47(1):109--146, 2015.

\bibitem{LZ-sparse}
E.~Lubetzky and Y.~Zhao.
\newblock On the variational problem for upper tails in sparse random graphs.
\newblock {\em Random Structures Algorithms}, 50(3):420--436, 2017.

\bibitem{radin2018symmetry}
C.~Radin, K.~Ren, and L.~Sadun.
\newblock A symmetry breaking transition in the edge/triangle network model.
\newblock {\em Annales de l'Institut Henri Poincar{\'e} D}, 5(2):251--286,
  2018.

\bibitem{radin2013phase}
C.~Radin and M.~Yin.
\newblock Phase transitions in exponential random graphs.
\newblock {\em The Annals of Applied Probability}, 23(6):2458--2471, 2013.

\bibitem{War}
L.~Warnke.
\newblock Upper tails for arithmetic progressions in random subsets.
\newblock {\em Israel J. Math.}, 221:317--365, 2017.

\bibitem{yan}
J.~Yan.
\newblock Nonlinear large deviations: Beyond the hypercube.
\newblock {\em arXiv preprint arXiv:1703.08887}, 2017.

\bibitem{yin2016}
M.~Yin.
\newblock A detailed investigation into near degenerate exponential random
  graphs.
\newblock {\em Journal of Statistical Physics}, 164(1):241--253, 2016.

\bibitem{yin2016asymptotic}
M.~Yin, A.~Rinaldo, and S.~Fadnavis.
\newblock Asymptotic quantization of exponential random graphs.
\newblock {\em The Annals of Applied Probability}, 26(6):3251--3285, 2016.

\end{thebibliography}

\appendix

\section{A Simple Mean-Field Condition}
\label{sec:ut_hypergraph}

In this section we derive a simple sufficient condition for a sequence of  $s$-uniform hypergraphs to be mean-field for the upper tail problem (recall Definition \ref{defn:approx}), using the framework of non-linear large deviations developed in \cite{eldan}. To this end, we need a few definitions: The \textit{Lipschitz constant} of a function $h: \{0,1\}^n \mapsto \mathbb{R}$, is defined as: $$\mathrm{Lip}(h) = \max_{i\in[n],\bm y\in \{0,1\}^n} |\partial_i h(\bm y)|,$$ where 
$$\partial_i h(\bm y)=h(y_1, \ldots, y_{i-1}, 1, y_{i+1}, \ldots, y_n)-h(y_1, \ldots, y_{i-1}, 0, y_{i+1}, \ldots, y_n),$$ 
denotes the discrete partial derivative of $h(\bm y)$ with respect to the $i$-th coordinate. The \textit{Gaussian-width} of a set $A \subseteq \mathbb{R}^n$ is defined as: $$\mathbf{GW}(A) = \E\left[\sup_{\bm a\in A} \bm a^T\bm Z\right],$$ where $\bm Z$ follows the standard $n-$dimensional Gaussian distribution $N_n(\bm 0, I_n)$. Finally, the \textit{gradient-complexity} of $h$ is defined as: $$D(h) = \mathbf{GW}\left(\{\nabla h(\bm y): \bm y \in \{0,1\}^n\} \cup \{\bm 0\}\right),$$ where $\nabla h(\bm y) := (\partial_1h(\bm y),\cdots,\partial_nh(\bm y))^\top$.

For $\bm x \in [0, 1]$, define the function: 
\begin{align}\label{eq:fn_hypergraph_ut}
f_n(\bm x) := \frac{|V(H_n)|}{|E(H_n)|}t(H_n,\bm x),
\end{align}
where $t(H_n,\bm x)$ is as in Definition \ref{defn:approx}. It follows from \cite[Theorem 5]{eldan} that a sequence of  $s$-uniform hypergraph $\{H_n\}_{n \geq 1}$ is mean-field for the upper tail problem if 
\begin{align}\label{eq:ut_grad}
\mathrm{Lip}(f_n)=O(1) \quad \text{and} \quad D(f_n) = o(|V(H_n)|).
\end{align}
In \cite[Corollary 6.1]{Briet} it was proved that the above conditions are satisfied whenever \eqref{eq:suniform} holds. The first condition in \eqref{eq:suniform} ensures $D(f_n) = o(|V(H_n)|)$, while the second condition implies $\mathrm{Lip}(f_n)=O(1)$. In the following proposition, we derive another easy sufficient condition for \eqref{eq:ut_grad} to hold, in terms of the number of hyperdges in $H_n$. As a special case, this recovers the mean-field condition for graphs \eqref{eq:graph}.

\begin{ppn}\label{ppn:ut_hypergraph} A sequence of $s$-uniform hypergraphs $\{H_n\}_{n \geq 1}$ is mean-field for the upper tail problem if 
\begin{align}\label{eq:hypergraph}
|E(H_n)| \gg |V(H_n)|^{s-1} \quad \text{and} \quad  \Delta_1(H_n)=O\left(\frac{|E(H_n)|}{|V(H_n)|}\right),
\end{align}
where $\Delta_1(H_n)$ denotes the maximum degree of the hypergraph $H_n$. 
\end{ppn}

\begin{proof} Recall the definition of $f_n(\cdot)$ from \eqref{eq:fn_hypergraph_ut}. Note that,
\begin{equation*}
\mathrm{Lip}(f_n) \leq \frac{|V(H_n)|}{|E(H_n)|} \max_{v\in V(H_n)}\big|\{e \in E(H_n) :~v \in e\}\big| = \frac{|V(H_n)|}{|E(H_n)|}\Delta_1(H_n).
\end{equation*}
The second condition in \eqref{eq:hypergraph} then implies that $\mathrm{Lip}(f_n) = O(1)$.

Therefore, it suffices to show $D(f_n) = o(|V(H_n)|)$ (recall \eqref{eq:ut_grad}). Note that for a standard $|V(H_n)|-$dimensional Gaussian vector $\bm Z$ and $\bm y \in \{0,1\}^{|V(H_n)|}$, 
 \begin{eqnarray*}
 	|\nabla f_n(\bm y)^T \bm Z| &=& \Bigg|\sum_{v \in V(H_n)} \partial_v f_n(\bm y) Z_v\Bigg|\\&=& \frac{|V(H_n)|}{|E(H_n)|} \Bigg|\sum_{v \in V(H_n)} Z_v\sum_{e \in E(H_n) : v \in e}~ \prod_{u \in e\setminus\{v\}} y_u \Bigg|\\&=& \frac{|V(H_n)|}{|E(H_n)|} \Bigg|\sum_{v \in V(H_n)} Z_v\sum_{A \subseteq V(H_n)\setminus\{v\}}\bm 1\left\{A\bigcup \{v\}\in E(H_n)\right\} \prod_{u \in A} y_u\Bigg|\\&=& \frac{|V(H_n)|}{|E(H_n)|} \Bigg|\sum_{A\subseteq V(H_n): |A|=s-1}~ C_A \prod_{u\in A} y_u\Bigg|,
 \end{eqnarray*}
 where $C_A := \sum_{v\notin A} Z_v \cdot  \bm 1\left(A\bigcup \{v\}\in E(H_n)\right)$. By the Cauchy-Schwarz inequality, 
 $$|\nabla f_n(\bm y)^T \bm Z| \leq \frac{|V(H_n)|^{\frac{s+1}{2}}}{|E(H_n)|}\sqrt{\sum_{A \subseteq V(H_n): |A|=s-1} C_A^2} = \frac{|V(H_n)|^{\frac{s+1}{2}}}{|E(H_n)|}\sqrt{\bm C^\top \bm C},$$ where $\bm C$ is the column vector of length $\binom{|V(H_n)|}{s-1}$, having entries $C_A$ indexed by subsets $A$ of $V(H_n)$ having size $s-1$. Note that $\bm C = \mathbf{M} \bm Z$, where $\mathbf{M}=((M_{A, v}))$ is the $\binom{|V(H_n)|}{s-1} \times n$ matrix with entries $M_{A,v} := \bm 1\left\{v\notin A, A\bigcup \{v\} \in E(H_n)\right\}$, where $A$ varies over the set of all subsets of $V(H_n)$ having size $s-1$, and $v \in V(H_n)$. Hence, 
 \begin{eqnarray*}
 	D(f_n) &\leq& \frac{|V(H_n)|^{\frac{s+1}{2}}}{|E(H_n)|} \mathbb{E}\left(\sqrt{\bm Z^\top \mathbf{M}^\top \mathbf{M}\bm Z}\right)\nonumber\\ &\leq&  \frac{|V(H_n)|^{\frac{s+1}{2}}}{|E(H_n)|} \sqrt{\mathbb{E}\left(\bm Z^\top \mathbf{M}^\top \mathbf{M}\bm Z\right)}    \nonumber\\&=&\frac{|V(H_n)|^{\frac{s+1}{2}}}{|E(H_n)|}\sqrt{\mathrm{trace}\left[\mathbf{M}^\top \mathbf{M}\right]}\nonumber\\       &=&\frac{|V(H_n)|^{\frac{s+1}{2}}}{|E(H_n)|}\sqrt{\sum_{v\in V(H_n)}~\sum_{A\subseteq V(H_n): |A|=s-1}\bm 1\left\{v \notin A, A\bigcup\{v\}\in E(H_n)\right\}}\nonumber\\&=& \frac{|V(H_n)|^{\frac{s+1}{2}}}{|E(H_n)|}\sqrt{\sum_{v\in V(H_n)}d_{H_n}(v)} = \frac{\sqrt{s}|V(H_n)|^{\frac{s+1}{2}}}{\sqrt{|E(H_n)|}}= o(|V(H_n)|),
 \end{eqnarray*}
by the first condition in \eqref{eq:hypergraph}.
\end{proof}

In general, the conditions in Proposition \ref{ppn:ut_hypergraph} and those in \eqref{eq:suniform} are incomparable. The maximum co-degree condition in  \eqref{eq:suniform} holds for many sparse  hypergraphs, such as the $r$-AP counting hypergraph. On the other hand, Proposition \ref{ppn:ut_hypergraph} requires very high edge-density, but allows for larger co-degrees, as illustrated in the example below. To this end, for a finite set $T$ and a positive integer $s \geq 1$, denote by ${T \choose s}$ the set of all subsets of $T$ with cardinality $s$. 

\begin{example}\label{example:edge_condition} Fix $s \geq 3$, and take $(s-1)^{-2} < a< (s-1)^{-1}$. For $j \in \{1,2,\ldots, \ceil{n^a}\}$, suppose $$T_j := \{(j-1) \ceil{n^{1-a}} + 1, (j-1) \ceil{n^{1-a}} + 2, \ldots, j \ceil{n^{1-a}}\},$$ and let $F_{n}^{(j)}$ be the hypergraph with vertex set $T_j$ and hyperedge set ${T_j \choose s}$. Note that $$F_{n}^{(1)}, F_{n}^{(2)}, \ldots, F_{n}^{(\ceil{n^a})},$$ is a collection of  $\ceil{n^a}$ disjoint copies of the  complete $s$-uniform hypergraph on $\ceil{n^{1-a}}$ vertices.  Next, define the hypergraph $\cD_n=(V(\cD_n), E(\cD_n))$ with vertex set $V(\cD_n):=\{1', 2', \ldots, n'\}$ and hyperedge set 
$$E(\cD_n):= \left\{B \bigcup \{1', 2'\}:  B \in \binom{V(\cD_n)\backslash\{1', 2'\}}{s-2}\right\},$$
the collection of all $s$-element subsets of $V(\cD_n)$ containing $1'$ and $2'$. Finally, let $H_n$ be the sequence of hypergraphs obtained by taking the disjoint union of 
$$F_{n}^{(1)}, F_{n}^{(2)}, \ldots, F_{n}^{(\ceil{n^a})} \quad \text{and} \quad \cD_n.$$ 
Note that $|E(H_n)| = \ceil{n^a} |E(F_{n}^{(1)})| + {n-2\choose s-2}= \Theta(n^{s - a(s-1)})$. Moreover, 
$$\Delta_1(H_n) \leq \max\{n^{(1-a)(s-1)}, n^{s-2}\} = O(n^{(1-a)(s-1)}),$$ and $\Delta_2(H_n) \gtrsim n^{s-2}$. Now, it is easy to check that condition \eqref{eq:hypergraph} is satisfied. On the other hand, using $ a> (s-1)^{-2}$, $$\frac{|E(H_n)|}{|V(H_n)|^{2-\frac{1}{2\lceil\frac{s-1}{2}\rceil}}} \lesssim n^{s-2 -a(s-1)+\frac{1}{2\lceil\frac{s-1}{2}\rceil}}  \ll n^{s-2},$$ showing that the first condition in \eqref{eq:suniform} is not satisfied.
\end{example}

For the case $s=2$ (which corresponds to graphs),  \eqref{eq:suniform} always implies \eqref{eq:hypergraph}. To see this note that for any sequence of non-empty graphs $\{G_n\}_{n \geq 1}$, $\Delta_2(G_n)=1$. Therefore, the first condition in \eqref{eq:suniform} is equivalent to $|E(G_n)| \gg 
|V(G_n)|^{\frac{3}{2}} \sqrt{\log |V(G_n)|}$, which implies $|E(G_n)| \gg 
|V(G_n)|$, the first condition in \eqref{eq:hypergraph}.  For an example where \eqref{eq:hypergraph} holds, but \eqref{eq:suniform} does not, consider a sequence $\{G_n\}_{n \geq 1}$ of $d$-regular graphs on $n$ vertices, with $1 \ll d = O(\sqrt{n})$. In this case, condition  \eqref{eq:hypergraph} is  trivially satisfied (note that $|E(G_n)|= \frac{nd}{2} \gg n$), but the first condition in \eqref{eq:suniform} does not hold, since $$|E(G_n)| = \frac{nd}{2} \ll n^{\frac{3}{2}} \sqrt{\log n}.$$

\end{document}